\newtheorem{thm}{Theorem}[section]
\newtheorem{lem}[thm]{Lemma}
\newtheorem{prop}[thm]{Proposition}
\newtheorem{cor}[thm]{Corollary}
\theoremstyle{definition}
\newtheorem{defn}[thm]{Definition}
\newtheorem{rem}[thm]{Remark}
\numberwithin{equation}{section}
\newcommand{\bv}{\mathbf{v}}
\newcommand{\R}{\mathbb{R}}
\newcommand{\IC}{\mathbb{C}}
\newcommand{\IE}{\mathbb{E}}
\newcommand{\IW}{\mathbb{W}}
\newcommand{\cS}{\mathcal{S}} %Schwartz space
\newcommand{\cF}{\mathcal{F}} %Fourier transform
\newcommand{\cE}{\mathcal{E}}
\newcommand{\cP}{\mathcal{P}}
\newcommand{\cR}{\mathcal{R}}
\newcommand{\loc}{\operatorname{loc}}
\renewcommand{\L}{\operatorname{L}} %Lebesgue spaces
\newcommand{\C}{\operatorname{C}} %spaces of continuous functions
\renewcommand{\H}{\operatorname{H}} %Sobolev spaces, spaces of holomorphic functions
\newcommand{\W}{\operatorname{W}}% Sobolev spaces
\newcommand{\B}{\operatorname{B}}% Besov spaces
\newcommand{\dhalf}{D_t^{1/2}} %half time derivative
\newcommand{\HT}{H_t} %Hilbert transform
\newcommand{\e}{\mathrm{e}} %Euler's number
\let\ii\i
\renewcommand{\i}{\mathrm{i}} %imaginary unit
\newcommand{\eps}{\varepsilon} %epsilon
\renewcommand\Re{\operatorname{Re}}
\newcommand{\blank}{\mathrel{\,\cdot\,}} %Dummy variable
\newcommand{\Lop}{\mathcal{L}} %bounded linear operators
\newcommand{\cl}[1]{\overline{#1}} %closure
\DeclareMathOperator{\supp}{supp} %support
\renewcommand{\d}{\mathrm{d}} %differential
\newcommand{\semi}[1]{[\mathrel{\,#1\,}]}
\newcommand{\Isemi}[1]{[\![\mathrel{\,#1\,}]\!]}
\newcommand{\dual}[2]{\langle #1,#2 \rangle}
\newcommand{\sgn}{\operatorname{sgn}}
\def\Xint#1{\mathchoice
{\XXint\displaystyle\textstyle{#1}}%
{\XXint\textstyle\scriptstyle{#1}}%
{\XXint\scriptstyle\scriptscriptstyle{#1}}%
{\XXint\scriptscriptstyle%
\scriptscriptstyle{#1}}%
\!\int}
\def\XXint#1#2#3{{\setbox0=\hbox{$#1{#2#3}{%
\int}$ }
\vcenter{\hbox{$#2#3$ }}\kern-.6\wd0}}
\def\barint{\,\Xint -} % \, corrects the \! used in the definition
\def\bariint{\barint_{} \kern-.4em \barint}
\def\bariiint{\bariint_{} \kern-.4em \barint}
\renewcommand{\iint}{\int_{}\kern-.34em \int} %\ minor space between the integrals
\renewcommand{\iiint}{\iint_{}\kern-.34em \int} %\ minor space between the integrals
\title[Non-local self-improving properties]{Non-local self-improving properties:\\ A functional analytic approach}
\author{Pascal Auscher}
\author{Simon Bortz}
\author{Moritz Egert}
\author{Olli Saari}
\address{Laboratoire de Math\'{e}matiques d'Orsay, Univ. Paris-Sud, CNRS, Universit\'{e} Paris-Saclay, 91405 Orsay, France}
\email{pascal.auscher@math.u-psud.fr, moritz.egert@math.u-psud.fr}
\address{	
	School of Mathematics, University of Minnesota, Minneapolis, MN
55455, USA}
	\email{bortz010@umn.edu} 
\address{	
	Department of Mathematics and Systems Analysis, 
	Aalto University,
	FI-00076 Aalto, 
	Finland}
	\email{olli.saari@aalto.fi}
\thanks{The first and third authors were partially supported by the ANR project ``Harmonic Analysis at its Boundaries'', ANR-12-BS01-0013. This material is based upon work supported by National Science Foundation under Grant No.\ DMS-1440140 while the  authors were in residence at the MSRI in Berkeley, California, during the Spring 2017 semester. The second author was supported by the NSF INSPIRE Award DMS 1344235. The third author was supported by a public grant as part of the FMJH. The fourth author was supported by the Academy of Finland (Decision No.\ 277008).}
\subjclass[2010]{Primary: 35R11, 35D30; Secondary: 26A33, 46B70, 35K90} %Prim: Fractional equations, weak solutions Sec: Fractional derivative, interpolation, abstract parabolic equations
\date{\today}
\dedicatory{}
\keywords{Elliptic equations, fractional differentiability, non-local and stable-like operators, self-improving properties, analytic perturbation arguments, Cauchy problem for non-local parabolic equations.}
\begin{document}
\begin{abstract}
A functional analytic approach to obtaining self-improving properties of solutions to linear non-local elliptic equations is presented. It yields conceptually simple and very short proofs of some previous results due to Kuusi--Mingione--Sire and Bass--Ren. Its flexibility is demonstrated by new applications to non-autonomous parabolic equations with non-local elliptic part and questions related to maximal regularity.
\end{abstract}
\maketitle
%%%%%%%%%%%%%%%%%%%%%%%%%%%%%%%%%%%%%%%%%%%%%%%%%%%%%%%%%%%%%%%%%%%%%%%%%%%%%%%%%%%%%%%%%%%%%%%%%%%%%%%%%%%%%%%%%%%%%%%%%%%%%%%%%%%%%%%%%%%%%%%%%%%%%%%%%%%%%%%%%%%
\section{Introduction}

Recently, there has been a particular interest in linear elliptic integrodifferential equations of type
\begin{align*}
 \iint_{\R^n \times \R^n} A(x,y) \frac{(u(x) - u(y))\cdot \cl{(\phi(x)-\phi(y))}}{|x-y|^{n+2 \alpha}} \, \d x \, \d y = \int_{\R^n} f(x) \cdot \cl{\phi(x)} \, \d x \quad (\phi \in \C_0^\infty(\R^n)),
\end{align*}
where the kernel $A$ is a measurable function on $\R^n \times \R^n$ with bounds
\begin{align}
\label{eq:ellip}
0<\lambda \leq \Re A(x,y) \leq |A(x,y)| \leq \lambda^{-1} \qquad (\text{a.e. }(x,y) \in \R^n \times \R^n) 
\end{align}
and $\alpha$ is a number strictly between $0$ and $1$. See for example \cite{BR,BWZ1,BWZ2,KMS,LPPS,Schikorra}. Such fractional equations of order $2 \alpha$ exhibit new phenomena that do not have any counterpart in the theory of second order elliptic equations in divergence form: In \cite{KMS}, building on earlier ideas in \cite{BR}, it has been shown that under appropriate integrability assumptions on $f$, weak solutions $u$ in the corresponding fractional $\L^2$-Sobolev space $\W^{\alpha,2}(\R^n)$ self-improve in integrability \emph{and} in differentiability. Whereas the former is also known for second-order equations under the name of ``Meyers' estimate'' \cite{Meyers}, the improvement in regularity without any further smoothness assumptions on the coefficients is a feature of non-local equations only \cite[p.~59]{KMS}. We mention that \cite{KMS} also treats semi-linear variants of the equation above, but already the linear case is of interest for further applications, for example to the stability of stable-like processes \cite{BR}. 

Up to now, most approaches are guided by the classical strategy for the second-order case, that is, they employ fractional Caccioppoli inequalities to establish non-local reverse H\"older estimates and then prove a delicate self-improving property for such inequalities in the spirit of Gehring's lemma. The purpose of this note is to present a functional analytic approach which we believe is of independent interest for several other applications related to partial differential equations of fractional order as it yields short and conceptually very simple proofs. 

Let us outline our strategy that is concisely implemented in Section~\ref{sec:Dirichlet form}. Writing the fractional equation in operator form
\begin{align}
\label{eq:shorthand}
\langle \Lop_{\alpha,A}u , \phi \rangle = \langle f, \phi \rangle, \qquad (u,\phi \in \W^{\alpha,2}(\R^n)),
\end{align}
the left-hand side is associated with a sesquilinear form on the Hilbert space $\W^{\alpha,2}(\R^n)$ and thanks to ellipticity \eqref{eq:ellip} the Lax-Milgram lemma applies and yields invertibility of $1+\Lop_{\alpha,A}$ onto the dual space. Now, the main difference compared with second order elliptic equations is that we can transfer regularity requirements between $u$ and $\phi$ without interfering with the coefficients $A$:
Without making any further assumption we may write
\begin{align*}
 \langle \Lop_{\alpha,A}u,  \phi \rangle = \iint_{\R^n \times \R^n} A(x,y) \frac{u(x) - u(y)}{|x-y|^{n/2+\alpha + \eps}} \cdot \frac{\cl{\phi(x)-\phi(y)}}{|x-y|^{n/2+ \alpha-\eps}} \, \d x \, \d y,
\end{align*}
which yields boundedness $\Lop_{\alpha,A}: \W^{\alpha+\eps,2}(\R^n) \to \W^{\alpha-\eps,2}(\R^n)^*$. Then the ubiquitous analytic perturbation lemma of \v{S}ne{\u{\ii}}berg \cite{Sneiberg-Original} allows one to extrapolate invertibility to $\eps>0$ small enough. Compared to \cite{BR,KMS} we can also work in an $\L^p$-setting without hardly any additional difficulties. In this way, we shall recover some of their results on global weak solutions in Section~\ref{sec:higher diff} and discuss some new and sharpened local self-improvement properties in Section~\ref{sec:local}.

Finally, in Section~\ref{sec:parabolic} we demonstrate the simplicity and flexibility of our approach by proving that for each $f \in \L^2(0,T; \L^2(\R^n))$ the unique solution $u \in \H^1(0,T; \W^{\alpha,2}(\R^n)^*) \cap \L^2(0,T; \W^{\alpha,2}(\R^n)) $ of the non-autonomous Cauchy problem
\begin{align*}
 u'(t) + \Lop_{\alpha,A(t)} u(t) = f(t), \qquad u(0) = 0,
\end{align*}
self-improves to the class $\H^1(0,T; \W^{\alpha-\eps,2}(\R^n)^*) \cap \L^2(0,T; \W^{\alpha+\eps,2}(\R^n))$ for some $\eps>0$. Here, each $\Lop_{\alpha,A(t)}$ is a fractional elliptic operator as in \eqref{eq:shorthand} with uniform upper and lower bounds in $t$ but again we do not assume any regularity on $A(t,x,y) := A(t)(x,y)$ besides measurability in all variables. We remark that $\eps = \alpha$ and $\W^{0,2}(\R^n) := \L^2(\R^n)$ would mean maximal regularity, which in general requires some smoothness of the coefficients in the $t$-variable. See \cite{MaxRegSurvey} for a recent survey and the recent paper \cite{Grubb} for related results on regularity of solutions to such fractional heat equations with smooth coefficients. In this regard, our results reveal a novel phenomenon in the realm of non-autonomous maximal regularity. Let us remark that recently we have explored related techniques also for second-order parabolic systems in \cite{localpara}.
%%%%%%%%%%%%%%%%%%%%%%%%%%%%%%%%%%%%%%%%%%%%%%%%%%%%%%%%%%%%%%%%%%%%%%%%%%%%%%%%%%%%%%%%%%%%%%%%%%%%%%%%%%%%%%%%%%%%%%%%%%%%%%%%%%%%%%%%%%%%%%%%%%%%%%%%%%%%%%%%%%%
\section{Notation}
\label{sec:Notation}
Any Banach space $X$ under consideration is taken over the complex numbers and we shall denote by $X^*$ the \emph{anti-dual space} of conjugate linear functionals $X \to \IC$. In particular, all function spaces are implicitly assumed to consist of complex valued functions. Throughout, we assume the dimension of the underlying Euclidean space to be $n \geq 2$.

Given $s \in (0,1)$ and $p \in (1,\infty)$, the \emph{fractional Sobolev space} $\W^{s,p}(\R^n)$ consists of all $u \in \L^p(\R^n)$ with finite semi-norm
\begin{align*}
 \semi{u}_{s,p} := \bigg(\iint_{\R^n \times \R^n} \frac{|u(x) - u(y)|^p}{|x-y|^{n+sp}} \, \d x \, \d y \bigg)^{1/p} < \infty.
\end{align*}
It becomes a Banach space for the norm $\|\cdot\|_{s,p} :=  (\|\cdot\|_p^{p} + \semi{\blank}_{s,p}^{p})^{1/p}$, where here and throughout $\|\cdot\|_p$ denotes the norm on $\L^p(\R^n)$. Moreover, $\W^{s,2}(\R^n)$ is a Hilbert space for the inner product
\begin{align*}
 \langle u,v \rangle := \int_{\R^n} u(x) \cdot \cl{v(x)} \, \d x + \iint_{\R^n \times \R^n} \frac{(u(x) - u(y))\cdot \cl{(v(x)-v(y))}}{|x-y|^{n+2s}} \, \d x \, \d y.
\end{align*}
Every so often, it will be more convenient to view $\W^{s,p}(\R^n)$ within the scale of Besov spaces. More precisely, taking $\phi \in \cS(\R^n)$ with Fourier transform $\cF \phi: \R^n \to [0,1]$ such that $\cF \phi(\xi) = 1$ for $|\xi| \leq 1$ and $\cF \phi(\xi) = 0$ for $|\xi| \geq 2$ and defining $\phi_0:= \phi$ and $(\cF \phi_j)(\xi) := \cF \phi(2^{-j} \xi) - \cF \phi (2^{-j+1} \xi)$ for $\xi \in \R^n$ and $j \geq 1$, the \emph{Besov space} $\B^{s}_{p,p}(\R^n)$ is the collection of all $u \in \L^p(\R^n)$ with finite norm
\begin{align}
\label{BesovNorm}
 \|u\|_{\B^{s}_{p,p}(\R^n)} :=  \bigg(\sum_{j = 0}^\infty 2^{jsp} \|\phi_j \ast u\|_p^p \bigg)^{1/p} < \infty.
\end{align}
Different choices of $\phi$ yield equivalent norms on $\B^{s}_{p,p}(\R^n)$. Moreover, the Schwartz class $\cS(\R^n)$, and thus also the space of smooth compactly supported functions $\C_0^\infty(\R^n)$, is dense in any of these spaces, see \cite[Sec.~2.3.3]{NewTriebel}. Finally, $\W^{s,p}(\R^n) = \B_{p,p}^{s}(\R^n)$ up to equivalent norms \cite[Sec.~2.5.12]{NewTriebel}.
%%%%%%%%%%%%%%%%%%%%%%%%%%%%%%%%%%%%%%%%%%%%%%%%%%%%%%%%%%%%%%%%%%%%%%%%%%%%%%%%%%%%%%%%%%%%%%%%%%%%%%%%%%%%%%%%%%%%%%%%%%%%%%%%%%%%%%%%%%%%%%%%%%%%%%%%%%%%%%%%%%%%%%%%
\section{Analysis of the Dirichlet form}
\label{sec:Dirichlet form}

In this section, we carefully analyze the mapping properties of the \emph{Dirichlet form}
\begin{align}
\label{eq:Lopa}
 \cE_{\alpha,A}(u,v) := \iint_{\R^n \times \R^n} A(x,y) \frac{(u(x) - u(y))\cdot \cl{(v(x)-v(y))}}{|x-y|^{n+2\alpha}} \, \d x \, \d y,
\end{align}
which we define here for $u,v \in \W^{\alpha,2}(\R^n)$. Starting from now, $\alpha \in (0,1)$ is fixed and $A: \R^n \times \R^n \to \IC$ denotes a measurable kernel that satisfies the accretivity condition \eqref{eq:ellip}. This entails boundedness 
\begin{align*}
 |\cE_{\alpha,A}(u,v)| \leq \lambda^{-1} \semi{u}_{\alpha,2} \semi{v}_{\alpha,2} \leq \lambda^{-1} \|u\|_{\alpha,2} \|v\|_{\alpha,2}
\end{align*}
and quasi-coercivity
\begin{align}
\label{eq:lower bound sesqui}
 \Re \cE_{\alpha,A}(u,u) \geq \lambda \semi{u}_{\alpha,2}^2  
 \geq \lambda \|u\|_{\alpha,2}^2 - \|u\|_2^2. 
\end{align}
Together with the sesquilinear form $\cE_{\alpha,A}$ comes the associated operator $\Lop_{\alpha,A}: \W^{\alpha,2}(\R^n) \to \W^{\alpha,2}(\R^n)^*$ defined through
\begin{align*}
 \langle \Lop_{\alpha,A}u, v \rangle := \cE_{\alpha,A}(u,v),
\end{align*}
where $\langle \cdot \,, \cdot \rangle$ denotes the sesquilinear duality between $\W^{\alpha,2}(\R^n)$ and its anti-dual, extending the inner product on $\L^2(\R^n)$. 

As an immediate consequence of the Lax-Milgram lemma we can record

\begin{lem}
\label{lem:Lax-Milgram}
The operator $1 + \Lop_{\alpha,A}: \W^{\alpha,2}(\R^n) \to \W^{\alpha,2}(\R^n)^*$ is bounded and invertible. Its norm and the norm of its inverse do not exceed $\lambda^{-1}$.
\end{lem}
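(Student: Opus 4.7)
The plan is to recognize $1 + \Lop_{\alpha,A}$ as the bounded operator on $\W^{\alpha,2}(\R^n)$ associated, via the duality $\langle\cdot,\cdot\rangle$, with the sesquilinear form
\[
b(u,v) := \langle u,v\rangle_{\L^2(\R^n)} + \cE_{\alpha,A}(u,v) \qquad (u,v \in \W^{\alpha,2}(\R^n)),
\]
and then to verify boundedness and coercivity so that the Lax-Milgram lemma on the Hilbert space $\W^{\alpha,2}(\R^n)$ can be invoked.

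For the upper bound, I would combine $|\langle u,v\rangle_{\L^2}| \leq \|u\|_2 \|v\|_2$ with the boundedness $|\cE_{\alpha,A}(u,v)| \leq \lambda^{-1} \semi{u}_{\alpha,2} \semi{v}_{\alpha,2}$ recorded above. The constraint $\lambda \leq \lambda^{-1}$ built into \eqref{eq:ellip} forces $\lambda \leq 1$, hence $\lambda^{-1} \geq 1$, so both summands are dominated by $\lambda^{-1}$ times the product of the corresponding pieces in the definition of $\|\cdot\|_{\alpha,2}^2$. A single Cauchy--Schwarz inequality applied to the Hilbert inner product defining that norm then yields $|b(u,v)| \leq \lambda^{-1} \|u\|_{\alpha,2} \|v\|_{\alpha,2}$, which gives the claimed bound on the operator norm of $1+\Lop_{\alpha,A}$.

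For coercivity, I would add $\|u\|_2^2$ to the lower bound \eqref{eq:lower bound sesqui}. This eliminates its middle term entirely, and using $\lambda \leq 1$ once more I obtain
\[
\Re b(u,u) \geq \|u\|_2^2 + \lambda \semi{u}_{\alpha,2}^2 \geq \lambda \|u\|_{\alpha,2}^2.
\]
The Lax-Milgram lemma then guarantees that $1+\Lop_{\alpha,A}$ is invertible from $\W^{\alpha,2}(\R^n)$ onto its anti-dual with $\|(1+\Lop_{\alpha,A})^{-1}\| \leq \lambda^{-1}$.

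There is essentially no genuine obstacle, since the statement is a direct invocation of a classical tool. The only point requiring care is the bookkeeping needed to have both the norm of the operator and the norm of its inverse come out as the same constant $\lambda^{-1}$, which ultimately rests on the inequality $\lambda \leq 1$ implicit in the accretivity assumption \eqref{eq:ellip}.
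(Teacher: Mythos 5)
Your proof is correct and matches what the paper intends: Lemma~\ref{lem:Lax-Milgram} is stated there as an ``immediate consequence of the Lax-Milgram lemma'' with no written-out proof, and your verification of boundedness via Cauchy--Schwarz and coercivity via \eqref{eq:lower bound sesqui} (together with the observation that \eqref{eq:ellip} forces $\lambda \leq 1$, which is exactly what makes both operator norm and inverse norm come out as $\lambda^{-1}$) is precisely the calculation being left to the reader.
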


The key step in our argument will be to obtain the analogous result on `nearby' fractional Sobolev spaces $\W^{s,p}(\R^n)$. We begin with boundedness, which of course is the easy part.

\begin{lem}
\label{lem:1+L bounded}
Let $s,s' \in (0,1)$ and $p,p' \in (1,\infty)$ satisfy $s + s' = 2 \alpha$ and $1/p + 1/p' = 1$. Then $1 + \Lop_{\alpha,A}$ extends from $\C_0^\infty(\R^n)$ by density to a bounded operator $\W^{s,p}(\R^n) \to \W^{s',p'}(\R^n)^*$ denoted also by $1+\Lop_{\alpha,A}$, and
\begin{align*}
  \big|\big \langle u+\Lop_{\alpha,A} u, v \big \rangle \big| \leq \|u\|_{p} \|v\|_{p'} + \lambda^{-1} \semi{u}_{s,p} \semi{v}_{s',p'} \qquad 
\end{align*}
for all $u \in \W^{s,p}(\R^n)$ and all $v \in \W^{s',p'}(\R^n)$.
\end{lem}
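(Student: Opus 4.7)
The main idea, already flagged in the introduction, is to distribute the factor $|x-y|^{-n-2\alpha}$ asymmetrically between the two arguments of the Dirichlet form. Since
\[
 \frac{n}{p} + s + \frac{n}{p'} + s' = n + (s+s') = n+2\alpha,
\]
for $u, v \in \C_0^\infty(\R^n)$ I would write
\[
 \cE_{\alpha,A}(u,v) = \iint_{\R^n \times \R^n} A(x,y)\, \frac{u(x)-u(y)}{|x-y|^{n/p+s}}\cdot \frac{\cl{v(x)-v(y)}}{|x-y|^{n/p'+s'}} \, \d x \, \d y.
\]
Applying Hölder's inequality on $\R^n \times \R^n$ with conjugate exponents $p$ and $p'$ and using $|A|\leq \lambda^{-1}$, the first factor produces exactly $\semi{u}_{s,p}$ because $p(n/p+s) = n+sp$, and the second factor produces $\semi{v}_{s',p'}$ by the analogous identity. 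Combining this with the $\L^p$--$\L^{p'}$ duality bound $|\langle u, v\rangle| \leq \|u\|_p \|v\|_{p'}$ for the identity part yields the claimed inequality on smooth test functions.

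For the extension, I would invoke density of $\C_0^\infty(\R^n)$ in both $\W^{s,p}(\R^n)$ and $\W^{s',p'}(\R^n)$, which was recalled in Section~\ref{sec:Notation} via the identification with $\B^{s}_{p,p}$ and $\B^{s'}_{p',p'}$. The sesquilinear form $(u,v) \mapsto \langle u+\Lop_{\alpha,A}u, v\rangle$ is jointly continuous on $\C_0^\infty(\R^n) \times \C_0^\infty(\R^n)$ with respect to the $(s,p)$- and $(s',p')$-norms, hence admits a unique bounded sesquilinear extension to $\W^{s,p}(\R^n) \times \W^{s',p'}(\R^n)$. Viewing the first argument as variable and the second as a test element, this extension defines the advertised bounded linear map into the anti-dual $\W^{s',p'}(\R^n)^*$, with the same quantitative bound.

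I expect essentially no obstacle: the entire statement is an exercise in arranging exponents so that Hölder lands precisely on the two Gagliardo seminorms, and the requisite density is classical. The only point worth noting is compatibility of the extended operator with the original $\Lop_{\alpha,A}$ on $\W^{\alpha,2}(\R^n)$ (the case $s=s'=\alpha$, $p=p'=2$), but this is automatic from uniqueness of continuous extensions from a common dense subspace. Thus the content of the lemma is really the observation that the non-local character of $\cE_{\alpha,A}$ allows regularity to be traded between $u$ and $v$ freely, without invoking any structure of $A$ beyond measurability and the $\L^\infty$ bound in \eqref{eq:ellip}.
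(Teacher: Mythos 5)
Your proof is correct and follows essentially the same route as the paper: split $n+2\alpha = (n/p+s)+(n/p'+s')$, apply H\"older's inequality together with $|A|\le\lambda^{-1}$ to bound $|\cE_{\alpha,A}(u,v)|$ by $\lambda^{-1}\semi{u}_{s,p}\semi{v}_{s',p'}$, handle the identity part by $\L^p$--$\L^{p'}$ duality, and then extend by density of $\C_0^\infty(\R^n)$ in the fractional Sobolev spaces. The only cosmetic difference is that the paper establishes the estimate for $u,v\in\W^{\alpha,2}(\R^n)$ before extending, whereas you work directly with $\C_0^\infty$ test functions, which amounts to the same thing.
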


\begin{proof}
Given $u,v \in \W^{\alpha,2}(\R^n)$ we split $n + 2 \alpha = (n/p + s) + (n/p' + s')$ and apply H\"older's inequality with exponents $1 = 1/\infty + 1/p + 1/p'$ to give
\begin{align*}
 \big|\big \langle \Lop_{\alpha,A} u, v \big \rangle \big|
= \bigg|\iint_{\R^n \times \R^n} A(x,y) \frac{(u(x) - u(y))\cdot \cl{(v(x)-v(y))}}{|x-y|^{n+2\alpha}} \, \d x \, \d y \bigg|
\leq \lambda^{-1} \semi{u}_{s,p} \semi{v}_{s',p'}.
\end{align*}
Again by H\"older's inequality $|\langle u, v \rangle| \leq \|u\|_{p} \|v\|_{p'}$, yielding the required estimate for $u,v \in \W^{\alpha,2}(\R^n)$. Since $\C_0^\infty(\R^n)$ is a common dense subspace of all fractional Sobolev spaces under consideration here (see Section~\ref{sec:Notation}) this precisely means that $1+\Lop_{\alpha,A}$ extends to a bounded operator from $\W^{s,p}(\R^n)$ into the anti-dual space of $\W^{s',p'}(\R^n)$.
\end{proof}

\begin{rem}
\label{rem:1+L bounded}
It follows from Fatou's lemma that for $u$ and $v$ as in Lemma~\ref{lem:1+L bounded} we still have $\langle \Lop_{\alpha,A}u, v \rangle = \cE_{\alpha,A}(u,v)$ with the right-hand side given by \eqref{eq:Lopa}.
\end{rem}

We turn to the study of invertibility by means of a powerful analytic perturbation argument going back to {\v{S}}ne{\u{\ii}}berg~\cite{Sneiberg-Original}. In essence, the only supplementary piece of information needed for this approach is that the function spaces for boundedness obtained above form a complex interpolation scale.

We denote by $[X_0,X_1]_\theta$, $0<\theta<1$, the scale of \emph{complex interpolation spaces} between two Banach spaces $X_0$, $X_1$ that are both included in the tempered distributions $\cS'(\R^n)$. Background information can be found in \cite{Bergh-Loefstroem} and \cite{NewTriebel}, but for the understanding of this paper we do not require any further knowledge on this theory except for the identity
\begin{align}
\label{eq:interpolSob}
 \big[\W^{s_0,p_0}(\R^n),  \W^{s_1,p_1}(\R^n)\big]_\theta = \W^{s,p}(\R^n)
\end{align}
for $p_0, p_1 \in (1,\infty)$, $s_0,s_1 \in (0,1)$, with $p,s$ given by
\begin{align*}
 \frac{1}{p} = \frac{1-\theta}{p_0} + \frac{\theta}{p_1}, \qquad s = (1-\theta)s_0 + \theta s_1,
\end{align*}
and the analogous identity for the anti-dual spaces. Equality \eqref{eq:interpolSob} is in the sense of Banach spaces with equivalent norms and the equivalence constants are uniform for $s_i, p_i, \theta$ within compact subsets of the respective parameter intervals. This uniformity is implicit in most proofs and we provide references where they are either stated or can be read off particularly easily: This is \cite[Sec.~2.5.12]{NewTriebel} to identify $\W^{s,p}(\R^n) = \B^{s}_{p,p}(\R^n)$ up to equivalent norms, \cite[Thm.~6.4.5(6)]{Bergh-Loefstroem} for the interpolation and \cite[Cor.~4.5.2]{Bergh-Loefstroem} for the (anti-) dual spaces.

\begin{prop}
\label{prop:invert}
Let $s,s' \in (0,1)$ and $p,p' \in (1,\infty)$ satisfy $s + s' = 2 \alpha$ and $1/p + 1/p' = 1$. There exists $\eps > 0$, such that if $|\frac{1}{2} - \frac{1}{p}| < \eps$ and $|s-\alpha| < \eps$, then
\begin{align*}
 1 + \Lop_{\alpha,A} : \W^{s,p}(\R^n) \to \W^{s',p'}(\R^n)^*
\end{align*}
is invertible and the inverse agrees with the one obtained for $s=\alpha$, $p=2$ on their common domain of definition. Moreover, $\eps$ and the norms of the inverses depend only on $\lambda$, $n$, and $\alpha$.
\end{prop}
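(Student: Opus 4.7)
The plan is to invoke \v{S}ne{\u{\ii}}berg's perturbation lemma along suitable one-parameter complex interpolation scales. The three essential ingredients are already in place: Lemma~\ref{lem:Lax-Milgram} gives invertibility with quantitative bound $\lambda^{-1}$ at the center $(s,p) = (\alpha,2)$; Lemma~\ref{lem:1+L bounded} gives uniform boundedness along any nearby scale; and the identity \eqref{eq:interpolSob} together with duality of complex interpolation identifies the interpolated spaces as the target fractional Sobolev spaces with equivalence constants uniform in compact parameter ranges. Since \v{S}ne{\u{\ii}}berg's lemma is a one-parameter statement whereas the proposition asks for a two-dimensional neighborhood of $(\alpha,2)$, I plan to apply it twice: first to perturb $s$ at $p=2$, and then to perturb $1/p$ at each admissible $s$.

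For the first step, I would fix $\eta \in (0, \min(\alpha, 1-\alpha))$ and consider the scale obtained by complex interpolation between $\W^{\alpha-\eta, 2}(\R^n)$ and $\W^{\alpha+\eta, 2}(\R^n)$; by \eqref{eq:interpolSob}, the interpolated space at parameter $\theta$ is $\W^{s(\theta), 2}(\R^n)$ with $s(\theta) = \alpha + (2\theta-1)\eta$, and the dual scale yields the corresponding anti-duals of $\W^{2\alpha - s(\theta), 2}(\R^n)$, both up to uniform equivalence constants. Lemma~\ref{lem:1+L bounded} bounds $1+\Lop_{\alpha,A}$ uniformly along this scale in terms of $\lambda$, while Lemma~\ref{lem:Lax-Milgram} gives invertibility at $\theta = 1/2$ with constant $\lambda^{-1}$. \v{S}ne{\u{\ii}}berg's lemma then yields a universal $\delta_1 > 0$ and invertibility of $1+\Lop_{\alpha,A} : \W^{s,2}(\R^n) \to \W^{2\alpha-s,2}(\R^n)^*$ for $|s-\alpha| < 2\eta\delta_1 =: \eps_1$, with inverse norm controlled by a constant depending only on $\lambda$, $n$, $\alpha$.

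For the second step, I would fix any $s$ with $|s-\alpha| < \eps_1/2$ and run the same argument on the scale interpolating between $\W^{s, p_0}(\R^n)$ and $\W^{s, p_1}(\R^n)$ for $1/p_0 = 1/2+\eta'$ and $1/p_1 = 1/2-\eta'$, with a small $\eta' > 0$. The interpolated scale at $\theta = 1/2$ is $\W^{s,2}(\R^n)$ where the first step guarantees invertibility with $s$-uniform bounds, and Lemma~\ref{lem:1+L bounded} again controls the boundedness along the new scale uniformly in $s$. \v{S}ne{\u{\ii}}berg yields a universal $\delta_2 > 0$ and invertibility of $1+\Lop_{\alpha,A} : \W^{s,p}(\R^n) \to \W^{s',p'}(\R^n)^*$ for $|1/p - 1/2| < 2\eta'\delta_2 =: \eps_2$. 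Taking $\eps := \min(\eps_1/2, \eps_2)$ closes the loop. The compatibility of inverses on common domains follows from \v{S}ne{\u{\ii}}berg's construction via analytic continuation from $\theta = 1/2$, or directly from the fact that the equation $\langle u,v\rangle + \cE_{\alpha,A}(u,v) = \langle f, v\rangle$ uniquely determines $u \in \W^{\alpha,2}(\R^n) \cap \W^{s,p}(\R^n)$ when tested against the common dense subspace $\C_0^\infty(\R^n)$.

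The main obstacle, technical rather than conceptual, is making sure all the \v{S}ne{\u{\ii}}berg constants remain uniform across the iteration. This requires care in choosing the endpoints of each scale so that the interpolation equivalence constants from \eqref{eq:interpolSob}, the boundedness constant from Lemma~\ref{lem:1+L bounded}, and the inverse norm produced by the first application of \v{S}ne{\u{\ii}}berg all stay controlled in terms of $\lambda$, $n$, and $\alpha$ alone; uniformity of the interpolation constants on compact parameter sets, as noted in Section~\ref{sec:Notation}, is precisely what makes this possible.
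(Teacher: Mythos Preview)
Your proposal is correct and uses exactly the same ingredients as the paper: Lemmas~\ref{lem:Lax-Milgram} and~\ref{lem:1+L bounded}, the interpolation identity~\eqref{eq:interpolSob}, and \v{S}ne{\u{\ii}}berg's lemma. The only difference is organizational. You iterate \v{S}ne{\u{\ii}}berg twice --- first in $s$ at $p=2$, then in $1/p$ at each fixed $s$ --- and must track uniformity of the first-step inverse bound when feeding it into the second step. The paper instead applies \v{S}ne{\u{\ii}}berg once along \emph{every} line through $(\alpha,1/2)$ in the $(s,1/p)$-plane and observes that the resulting perturbation radius is uniform over all such lines, since it depends only on the bounds at the center (Lemma~\ref{lem:Lax-Milgram}) and the uniformly controlled interpolation constants; this yields a full two-dimensional neighborhood in one shot. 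Both routes are valid; the paper's is a bit more economical. For consistency of inverses the paper cites a general interpolation fact from~\cite{KMM}; your first justification (the analytic continuation built into \v{S}ne{\u{\ii}}berg's argument) amounts to the same thing, but your second (the density argument) would need more care, since a priori $u_1 \in \W^{\alpha,2}(\R^n)$ and $u_2 \in \W^{s,p}(\R^n)$ solving the same equation do not lie in a common space where uniqueness has been established.
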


\begin{proof}
Consider the spaces $\W^{s,p}(\R^n)$ and $\W^{s',p'}(\R^n)^*$ as being arranged in the $(s,1/p)$-plane, where $p \in (1,\infty)$ but to make sense of our assumption we only consider parameters $s$ such that additionally $s' = 2\alpha -s \in (0,1)$.
By Lemma~\ref{lem:1+L bounded} we have boundedness 
\begin{align*}
  1 + \Lop_\alpha : \W^{s,p}(\R^n) \to \W^{s',p'}(\R^n)^*
\end{align*}
at every such $(s,1/p)$ and Lemma~\ref{lem:Lax-Milgram} provides invertibility at $(\alpha,1/2)$. 

Now, consider any line in the $(s,1/p)$-plane passing through $(\alpha,1/2)$ and take $(s_0,1/p_0)$, $(s_1,1/p_1)$ on opposite sides of $(\alpha, 1/2)$. Then \eqref{eq:interpolSob} precisely says that the scale of complex interpolation spaces between $\W^{s_0,p_0}(\R^n)$ and $\W^{s_1,p_1}(\R^n)$ corresponds (up to uniformly controlled equivalence constants) to the connecting line segment. The same applies to $\W^{s'_0,p'_0}(\R^n)^*$ and $\W^{s'_1,p'_1}(\R^n)^*$ on the segment connecting $(s_0',1/p_0')$ and $(s_1',1/p_1')$ through $(\alpha,1/2)$. 

According to {\v{S}}ne{\u{\ii}}berg's result, invertibility at the interior point $(\alpha,1/2)$ of this segment implies invertibility on an open surrounding interval whose radius around $(\alpha,1/2)$ depends only on upper and lower bounds at the center and the constants of norm equivalence, see \cite{Sneiberg-Original} or \cite[Thm.~1.3.25]{EigeneDiss} for a quantitative version. In particular, we can pick the same interval on every line segment as above and obtain $\eps>0$ with the required property. Finally, consistency of the inverses with the one computed at $(\alpha,1/2)$ is a general feature of complex interpolation \cite[Thm.~8.1]{KMM}. 
\end{proof}
%%%%%%%%%%%%%%%%%%%%%%%%%%%%%%%%%%%%%%%%%%%%%%%%%%%%%%%%%%%%%%%%%%%%%%%%%%%%%%%%%%%%%%%%%%%%%%%%%%%%%%%%%%%%%%%%%%%%%%%%%%%%%%%%%%%%%%%%%%%%%%%%%%%%%%%%%%%%%%%%%%%%%%%%
\section{Weak solutions to elliptic non-local problems}
\label{sec:higher diff}

We are ready to use the abstract results obtained so far, to establish higher differentiability and integrability results for weak solutions $u \in \W^{\alpha,2}(\R^n)$ to elliptic non-local problems of the form
\begin{align}
\label{eq:nonloc}
 \Lop_{\alpha,A} u = \Lop_{\beta, B} g + f.
\end{align}
Here, $\Lop_{\alpha,A}$ is associated with the form $\cE_{\alpha,A}$ in \eqref{eq:Lopa}. In the same way, $\Lop_{\beta,B}$ is associated with
\begin{align*}
%\label{eq:LopaB}
 \cE_{\beta,B}(g,v) := \iint_{\R^n \times \R^n} B(x,y) \frac{(g(x) - g(y))\cdot \cl{(v(x)-v(y))}}{|x-y|^{n+2 \beta}} \, \d x \, \d y,
\end{align*}
where starting from now, we fix $\beta \in (0,1)$ and $B \in \L^\infty(\R^n \times \R^n)$. Just like before, this guarantees that $\cE_{\beta,B}$ is a bounded sesquilinear form on $\W^{\beta,2}(\R^n)$ and hence that $\Lop_{\beta,B}$ is bounded from $\W^{\beta,2}(\R^n)$ into its anti-dual. However, we carefully note that we do neither assume a lower bound on $B$ nor any relation between $\alpha$ and $\beta$. In particular, $\beta > \alpha$ is allowed.

In the most general setup that is needed here, weak solutions are defined as follows.

\begin{defn}
\label{def:weak sol}
Let $f \in \L_{\loc}^1(\R^n)$ and $g \in \L_{\loc}^1(\R^n)$ such that $\cE_{\beta,B}(g,\phi)$ converges absolutely for every $\phi \in \C_0^\infty(\R^n)$. A function $u \in \W^{\alpha,2}(\R^n)$ is called \emph{weak solution} to \eqref{eq:nonloc} if 
\begin{align*}
 \cE_{\alpha,A}(u,\phi) = \cE_{\beta,B}(g,\phi) + \int_{\R^n} f \cdot \cl{\phi} \, \d x \qquad (\phi \in \C_0^\infty(\R^n)).
\end{align*}
\end{defn}

Suppose now that we are given a weak solution $u \in \W^{\alpha, 2}(\R^n)$. In order to invoke Proposition~\ref{prop:invert}, we write \eqref{eq:nonloc} in the form
\begin{align*}
 (1+ \Lop_{\alpha,A}) u = \Lop_{\beta, B} g + f + u.
\end{align*}
Hence, we see that higher differentiability and integrability for $u$, that is $u \in \W^{s,p}(\R^n)$ for some $s> \alpha$ and $p>2$, follows at once provided we can show $\Lop_{\beta, B} g + f + u \in \W^{s',p'}(\R^n)^*$ with $s'< \alpha$ and $p' < 2$ as in Proposition~\ref{prop:invert}. So, for the moment, our task is to work out the compatibility conditions on $u$, $f$, and $g$ to run this argument.

\subsection{Compatibility conditions for the right-hand side}
\label{subsec:compatibility}

The standing assumptions for all results in this section are $s' \in (0,1)$, $p \in (1,\infty)$ and $1/p + 1/p' =1 $. 

We begin by recalling the fractional Sobolev inequality, which will already take care of $u$ and $f$.

\begin{lem}[{\cite[Thm.~6.5]{FractionalSobolev}}]
\label{lem:Fractional Sobolev}
Suppose $s'p' < n$ and put $1/p'^* := 1/p' - s'/n$. Then
\begin{align*}
 \|v\|_{p'^*} \lesssim \semi{v}_{s',p'} \qquad (v \in \W^{s',p'}(\R^n)).
\end{align*}
In particular, $\W^{s',p'}(\R^n) \subset \L^{p'^*}(\R^n)$ and $\L^{p_*}(\R^n) \subset \W^{s',p'}(\R^n)^*$ with continuous inclusions, where $1/p_* := 1/p + s'/n$.
\end{lem}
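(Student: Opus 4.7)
My plan is to follow the classical direct approach of Di Nezza, Palatucci and Valdinoci in \cite{FractionalSobolev}. By density of $\C_0^\infty(\R^n)$ in $\W^{s',p'}(\R^n)$, it suffices to establish the inequality for $v\in\C_0^\infty(\R^n)$. The argument then naturally splits into the endpoint case $p'=1$ and the reduction of the general case to it.

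For $p'=1$, I would show $\|v\|_{n/(n-s')}\lesssim\semi{v}_{s',1}$ by decomposing $\R^n$ into the dyadic superlevel sets $A_k:=\{2^k<|v|\leq 2^{k+1}\}$ and applying a Vitali-type covering to each $A_k$, using the Gagliardo semi-norm to control the oscillations of $v$ between consecutive scales. Summing the resulting contributions in the sharp $\ell^{n/(n-s')}$ sense produces the claimed inequality. For general $1<p'<n/s'$, I would apply the $p'=1$ estimate to $w:=|v|^{\gamma}$ with $\gamma:=p'(n-s')/(n-s'p')>1$, a choice calibrated so that $\gamma\cdot n/(n-s')=p'^*$. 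Writing
\[
\big||v(x)|^\gamma-|v(y)|^\gamma\big|\leq \gamma\big(|v(x)|^{\gamma-1}+|v(y)|^{\gamma-1}\big)|v(x)-v(y)|
\]
and applying Hölder to the ensuing double integral with exponents $p'$ and $p'/(p'-1)$ extracts $\semi{v}_{s',p'}$ together with a factor of $\|v\|_{p'^*}^{\gamma-1}$; this factor is then absorbed into the left-hand side to conclude.

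The dual inclusion $\L^{p_*}(\R^n)\subset\W^{s',p'}(\R^n)^*$ is then immediate: by construction $1/p_*+1/p'^*=1$, so Hölder gives
\[
\bigg|\int_{\R^n} u\,\overline{v}\,\d x\bigg|\leq\|u\|_{p_*}\|v\|_{p'^*}\lesssim\|u\|_{p_*}\semi{v}_{s',p'}
\]
for $u\in\L^{p_*}(\R^n)$ and $v\in\W^{s',p'}(\R^n)$.

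The main obstacle is the endpoint case $p'=1$, whose geometric covering argument is the only truly non-trivial ingredient, as the Gagliardo semi-norm with its non-local character does not interact straightforwardly with level-set decompositions. A more functional-analytic alternative, more in line with the spirit of the present paper, would be to invoke the Besov identification $\W^{s',p'}(\R^n)=\B^{s'}_{p',p'}(\R^n)$ recalled in Section~\ref{sec:Notation}, use Bernstein's inequality on the Littlewood-Paley pieces from \eqref{BesovNorm} (giving $\|\phi_j*v\|_{p'^*}\lesssim 2^{js'}\|\phi_j*v\|_{p'}$), then appeal to the Jawerth embedding $\B^{s'}_{p',p'}(\R^n)\hookrightarrow\L^{p'^*}(\R^n)$, and finally discard the $\L^{p'}$-contribution from the full Sobolev norm by dilating $v_\lambda(x):=v(\lambda x)$ and letting $\lambda\to\infty$, which is legitimate since all three terms scale by different powers of $\lambda$.
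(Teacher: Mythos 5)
You are in a slightly unusual position: the paper does not supply a proof of this lemma at all; it is stated with a citation to Di Nezza--Palatucci--Valdinoci \cite[Thm.~6.5]{FractionalSobolev} and used as a black box. Any proof you write is therefore additional content, not a reconstruction of an argument in the paper.

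Your main route, reduction to the endpoint $p'=1$ via $w=|v|^\gamma$, has a genuine gap at the Hölder step. After the elementary inequality $\big||v(x)|^\gamma-|v(y)|^\gamma\big|\lesssim\big(|v(x)|^{\gamma-1}+|v(y)|^{\gamma-1}\big)|v(x)-v(y)|$, the term to be bounded is (up to symmetry)
\begin{align*}
 \iint_{\R^n\times\R^n}\frac{|v(x)|^{\gamma-1}\,|v(x)-v(y)|}{|x-y|^{n+s'}}\,\d x\,\d y .
\end{align*}
To extract $\semi{v}_{s',p'}$ you must place $|v(x)-v(y)|\,|x-y|^{-(n+s'p')/p'}$ into the $\L^{p'}$ slot; the conjugate slot then carries $|v(x)|^{\gamma-1}\,|x-y|^{-n/q'}$ with $1/p'+1/q'=1$, and the resulting factor $\big(\iint |v(x)|^{(\gamma-1)q'}|x-y|^{-n}\,\d x\,\d y\big)^{1/q'}$ is infinite because $\int|x-y|^{-n}\,\d y$ diverges. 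This is precisely why the ``power trick'' that proves the integer-order Gagliardo--Nirenberg--Sobolev inequality does not transfer naively to the non-local seminorm: the weight $|v(x)|^{\gamma-1}$ lives in only one of the two variables and cannot absorb the critical singularity in the other. It is also not what Di Nezza--Palatucci--Valdinoci do: their Theorem 6.5 treats general $p'$ directly via a dyadic-cube covering lemma and a layer-cake/Cavalieri argument on the superlevel sets, without reduction to $p'=1$.

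Your alternative route via $\W^{s',p'}(\R^n)=\B^{s'}_{p',p'}(\R^n)$ is closer to the paper's spirit and is essentially sound, provided you are careful that simply summing Bernstein's inequality over Littlewood--Paley blocks only yields $\|v\|_{p'^*}\lesssim\sum_j 2^{js'}\|\phi_j*v\|_{p'}=\|v\|_{\B^{s'}_{p',1}}$, and $\B^{s'}_{p',1}\subsetneq\B^{s'}_{p',p'}$; one really needs the Jawerth--Franke-type Sobolev embedding between Besov (or Triebel--Lizorkin) scales, as you indicate, after which the dilation argument to discard the $\L^{p'}$ piece is fine. The Hölder-duality step giving $\L^{p_*}(\R^n)\subset\W^{s',p'}(\R^n)^*$ is correct as written.
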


As for $g$, a dichotomy between the cases $2 \beta \geq \alpha$ and $2 \beta < \alpha$ occurs. This reflects a dichotomy for the parameter $s'$, which typically is close to $\alpha$. In the first case, $2 \beta \geq \alpha$, we shall rely on

\begin{lem}
\label{lem:Embeddings Lb large}
If $2 \beta - s' \in (0,1)$ and $g \in \W^{2\beta - s',p}(\R^n)$, then
\begin{align*}
 |\langle \Lop_{\beta,B} g, v \rangle| \leq \|B\|_\infty \semi{g}_{2\beta-s',p} \semi{v}_{s',p'} \qquad (v \in \W^{s',p'}(\R^n)).
\end{align*}
\end{lem}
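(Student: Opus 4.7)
The proof should be essentially a repeat of the computation in Lemma~\ref{lem:1+L bounded}, with the roles of $\alpha$ and the exponents adapted to the parameters at hand. The guiding idea is that the homogeneity of the kernel $|x-y|^{-(n+2\beta)}$ can be split to match the two semi-norms $\semi{g}_{2\beta-s',p}$ and $\semi{v}_{s',p'}$ appearing on the right-hand side.

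Concretely, I would write
\begin{align*}
 n+2\beta = \Big(\frac{n}{p}+(2\beta-s')\Big)+\Big(\frac{n}{p'}+s'\Big)
\end{align*}
and decompose the integrand defining $\cE_{\beta,B}(g,v)$ as
\begin{align*}
 B(x,y)\,\frac{g(x)-g(y)}{|x-y|^{n/p+(2\beta-s')}}\cdot\frac{\cl{v(x)-v(y)}}{|x-y|^{n/p'+s'}}.
\end{align*}
Applying H\"older's inequality with exponents $(\infty,p,p')$ to the three factors, pulling out $\|B\|_\infty$, and recognizing the resulting integrals as $\semi{g}_{2\beta-s',p}^{p}$ and $\semi{v}_{s',p'}^{p'}$ gives the claimed bound, provided that both semi-norms make sense; this is ensured by the assumption $2\beta-s'\in(0,1)$.

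Since the estimate is first established for $g,v\in\C_0^\infty(\R^n)$, one concludes by density, exactly as in the proof of Lemma~\ref{lem:1+L bounded}: $\C_0^\infty(\R^n)$ is dense in both $\W^{2\beta-s',p}(\R^n)$ and $\W^{s',p'}(\R^n)$, so the bilinear map $(g,v)\mapsto\cE_{\beta,B}(g,v)$ extends uniquely to a bounded bilinear form on the product space, which we identify with $\langle\Lop_{\beta,B}g,v\rangle$. An appeal to Fatou's lemma (in the spirit of Remark~\ref{rem:1+L bounded}) then shows that the extended pairing still admits the integral representation \eqref{eq:Lopa} with $\alpha$ replaced by $\beta$ and $A$ by $B$.

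There is no real obstacle here; the only point to watch is the bookkeeping of exponents to verify that the splitting of $n+2\beta$ matches precisely the two semi-norms in the stated estimate and that the resulting constraint $2\beta-s'\in(0,1)$ is the one assumed. Compared to Lemma~\ref{lem:1+L bounded}, we use the upper bound $\|B\|_\infty$ in place of $\lambda^{-1}$ and do not need any accretivity of $B$, which is consistent with the discussion preceding Definition~\ref{def:weak sol}.
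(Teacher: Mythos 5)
Your proposal is correct and follows essentially the same route as the paper: split $n+2\beta = (n/p + (2\beta - s')) + (n/p' + s')$ and apply H\"older's inequality with exponents $(\infty, p, p')$. The additional remarks about density and the Fatou-type extension of the integral representation are consistent with the paper's Lemma~\ref{lem:1+L bounded} and Remark~\ref{rem:1+L bounded}, which the paper implicitly relies on here.
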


\begin{proof}
Write $n + 2 \beta = (n/p + 2\beta -s') + (n/p' + s')$ and note that
\begin{align*}
 |\langle \Lop_{\beta,B} g, v \rangle| 
 \leq \iint_{\R^n \times \R^n} \bigg|\frac{g(x) - g(y)}{|x-y|^{n/p + 2 \beta -s'}}\bigg| \bigg|\frac{v(x)-v(y)}{|x-y|^{n/p' + s'}}\bigg| |B(x,y)| \, \d x \, \d y.
\end{align*}
The claim follows from H\"older's inequality.
\end{proof}

The second case, $2 \beta < \alpha$, is slightly more complicated as we need the following embedding related to the fractional Laplacian $(-\Delta)^{\beta}$, see \cite{SamkoEtAl, NewTriebel}. For the reader's convenience and later reference we give a direct argument.

\begin{lem}
\label{lem:Mixed embedding}
Suppose $s' > 2 \beta$, $s'p' < n$, and put $\frac{1}{q'} := \frac{1}{p'} - \frac{s'-2\beta}{n}$. Then
\begin{align*}
 \left(\int_{\R^n} \bigg( \int_{\R^n} \frac{|v(x) - v(y)|}{|x-y|^{n + 2 \beta}} \, \d y \bigg)^{q'} \, \d x \right)^{1/q'} \lesssim \semi{v}_{s',p'} \qquad (v \in \W^{s',p'}(\R^n)).
\end{align*}
\end{lem}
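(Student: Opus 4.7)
My plan is to establish a pointwise interpolation inequality
\[
 Tv(x) := \int_{\R^n} \frac{|v(x)-v(y)|}{|x-y|^{n+2\beta}} \, \d y \lesssim G(x)^{2\beta/s'}\,(Mv(x))^{(s'-2\beta)/s'},
\]
where $M$ is the Hardy--Littlewood maximal operator and
\[
 G(x) := \bigg(\int_{\R^n} \frac{|v(x)-v(y)|^{p'}}{|x-y|^{n+s'p'}}\, \d y\bigg)^{1/p'},
\]
and then to conclude via Hölder's inequality, Fubini (giving $\|G\|_{p'} = \semi{v}_{s',p'}$), the maximal inequality, and Lemma~\ref{lem:Fractional Sobolev}.

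To prove the pointwise bound I will split the defining integral for $Tv(x)$ at $|x-y|=R$, with a free parameter $R>0$ to be optimized. On the near part $\{|x-y|\le R\}$, I factor the kernel as $|x-y|^{-n-2\beta} = |x-y|^{-(n/p'+s')}\cdot |x-y|^{-(n/p+2\beta-s')}$ and apply Hölder in $y$ with exponents $p'$ and $p$. The assumption $s'>2\beta$ is exactly what makes the remaining integral $\int_{|x-y|\le R}|x-y|^{(s'-2\beta)p-n}\,\d y$ converge, with value a constant multiple of $R^{(s'-2\beta)p}$, which produces the estimate $\lesssim R^{s'-2\beta}\,G(x)$. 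On the far part $\{|x-y|>R\}$, I use $|v(x)-v(y)|\le |v(x)|+|v(y)|$: the $|v(x)|$ term integrates to $\lesssim R^{-2\beta}|v(x)|$, and a routine dyadic-annulus decomposition of the $|v(y)|$ term is controlled by $\lesssim R^{-2\beta}Mv(x)$. Choosing $R = (Mv(x)/G(x))^{1/s'}$ balances the two contributions and yields the pointwise bound.

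To finish, I raise the pointwise bound to the $q'$-th power, integrate, and apply Hölder with conjugate exponents chosen so that $G$ appears raised to $\L^{p'}$ and $Mv$ to $\L^{r}$, where $1/r := 1/p'-s'/n$. The required exponent identity
\[
 \frac{2\beta}{s'}\cdot\frac{1}{p'} + \frac{s'-2\beta}{s'}\cdot\frac{1}{r} = \frac{1}{q'}
\]
is immediate from the defining relation $1/q' = 1/p' - (s'-2\beta)/n$. The maximal inequality (applicable because $r>1$) together with Lemma~\ref{lem:Fractional Sobolev} then each produces a factor $\semi{v}_{s',p'}$, and the two factors combine through the interpolation exponents $2\beta/s'$ and $(s'-2\beta)/s'$ to give exactly $\semi{v}_{s',p'}$, as desired.

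The main point requiring care is this bookkeeping of Hölder exponents, which becomes elementary once one writes $(1-2\beta/s')\,s' = s'-2\beta$. The hypothesis $s'p'<n$ is used precisely to invoke Lemma~\ref{lem:Fractional Sobolev} (equivalently, to ensure $r<\infty$), while $s'>2\beta$ is what makes both the near-part Hölder integral converge and the interpolation exponent $2\beta/s'$ lie in $(0,1)$.
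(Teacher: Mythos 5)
Your proof follows exactly the same strategy as the paper's: split $Tv(x)$ at a radius $R$, use Hölder on the near part (exploiting $s'>2\beta$ for convergence) and a layered/dyadic estimate plus the maximal function on the far part, optimize $R = (Mv(x)/G(x))^{1/s'}$ to get the pointwise interpolation bound $Tv(x) \lesssim G(x)^{2\beta/s'} Mv(x)^{(s'-2\beta)/s'}$, and then apply Hölder with exponents placing $G$ in $\L^{p'}$ and $Mv$ in $\L^{p'^*}$, the maximal theorem, and Lemma~\ref{lem:Fractional Sobolev}. The argument and all the exponent bookkeeping match the paper's proof.
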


\begin{proof}
Let $v \in \W^{s',p'}(\R^n)$ and put $1/p'^* := 1/p' - s'/n$ as in Lemma~\ref{lem:Fractional Sobolev}, so that
\begin{align*}
 \frac{1}{q'} = \frac{2 \beta}{s' p'} + \frac{s' - 2 \beta}{s'} \frac{1}{p'^*} :=  \frac{1}{r_1} + \frac{1}{r_2}.
\end{align*}
Note that our assumptions guarantee $p'^*, r_1, r_2 \in (1,\infty)$. Denote by $M$ the Hardy-Littlewood maximal operator defined for $f \in \L_{\loc}^1(\R^n)$ via
\begin{align*}
 Mf(x) := \sup_{B \ni x} \frac{1}{|B|} \int_{B} |f(y)| \, \d y \qquad (x \in \R^n),
\end{align*}
where the supremum runs over all balls $B \subset \R^n$ that contain $x$. We claim that it suffices to prove
\begin{align}
\label{eq1:Mixed embedding}
 \int_{\R^n} \frac{|v(x) - v(y)|}{|x-y|^{n + 2 \beta}} \, \d y 
\lesssim \bigg(\int_{\R^n} \frac{|v(x) - v(y)|^{p'}}{|x-y|^{n + s'p'}} \, \d y \bigg)^{1/r_1} Mv(x)^{1 - p'/r_1} \qquad (\text{a.e. }x \in \R^n).
\end{align}
Indeed, temporarily assuming \eqref{eq1:Mixed embedding}, we can take $\L^q$-norms in the $x$-variable and apply H\"older's inequality on the integral in $x$ with exponents $ 1/q' = 1/r_1 + 1/r_2$
to deduce
\begin{align*}
 \left(\int_{\R^n} \bigg( \int_{\R^n} \frac{|v(x) - v(y)|}{|x-y|^{n + 2 \beta}} \, \d y \bigg)^{q'} \, \d x \right)^{1/q'} 
\lesssim \semi{v}_{s',p'}^{p'/r_1} \|Mv \|_{p'^*}^{1-p'/r_1}.
\end{align*}
The claim follows since we have $\|Mv\|_{p'^*} \lesssim \|v\|_{p'^*} \lesssim \semi{v}_{s',p'}$ by the maximal theorem and Lemma~\ref{lem:Fractional Sobolev}. 

Now, in order to establish \eqref{eq1:Mixed embedding} we split the integral at $|x-y| = h(x)$, with $h(x)$ to be chosen later. Since $2 \beta - s' < 0$ by assumption, we can write $n + 2 \beta = n/p' + s' + n/p + (2\beta - s')$ and apply H\"older's inequality to give
\begin{align}
\label{eq2:Mixed embedding}
\begin{split}
 \int_{|x-y| \leq h(x)} \frac{|v(x) - v(y)|}{|x-y|^{n + 2 \beta}} \, \d y 
&\leq h(x)^{s'- 2 \beta} \bigg(\int_{|x-y| \leq h(x)} \frac{|v(x) - v(y)|^{p'}}{|x-y|^{n + s'p'}} \, \d y \bigg)^{1/p'}
\\ & \leq h(x)^{s'- 2 \beta} \bigg(\int_{\mathbb{R}^n} \frac{|v(x) - v(y)|^{p'}}{|x-y|^{n + s'p'}} \, \d y \bigg)^{1/p'}.
\end{split}
\end{align}
The remaining integral is bounded by
\begin{align*}
 \int_{|x-y| \geq h(x)} \frac{|v(x) - v(y)|}{|x-y|^{n + 2 \beta}} \, \d y 
\leq \int_{|x-y| \geq h(x)} \frac{|v(x)|}{|x-y|^{n+2\beta}} \, \d y + \int_{|x-y| \geq h(x)} \frac{|v(y)|}{|x-y|^{n+2\beta}} \, \d y,
\end{align*}
where the first term equals $c|v(x)| h(x)^{-2 \beta}$ for some dimensional constant $c$. Next, on writing 
\begin{align*}
 \frac{1}{|x-y|^{n+2 \beta}} = \int_{|x-y|}^\infty \frac{n+2\beta}{r^n} \, \frac{\d r}{r^{1+2 \beta}}
\end{align*}
and changing the order of integration, the second term above becomes
\begin{align*}
 (n+ 2 \beta) \int_{h(x)}^\infty \bigg(\frac{1}{r^n}\int_{h(x) 
 \leq |x-y| \leq r} |v(y)| \, \d y \bigg) \, \frac{\d r}{r^{1+2 \beta}}
\end{align*}
and thus can be controlled by $C_{n,\beta}Mv(x) h(x)^{-2 \beta}$. Since $|v| \leq Mv$ almost everywhere, we obtain in conclusion
\begin{align}
\label{eq3:Mixed embedding}
 \int_{|x-y| \geq h(x)} \frac{|v(x) - v(y)|}{|x-y|^{n + 2 \beta}} \, \d y \lesssim h(x)^{-2 \beta} Mv(x) \qquad (\text{a.e. }x \in \R^n).
\end{align}
Finally, we pick $h(x)$ such that the right-hand sides of \eqref{eq2:Mixed embedding} and \eqref{eq3:Mixed embedding} are equal and obtain \eqref{eq1:Mixed embedding}.
\end{proof}

As an easy consequence we obtain the required bounds for $\Lop_{\beta,B}$.

\begin{cor}
\label{cor:Embdedding Lb small}
Suppose $s' > 2 \beta$, $s'p' < n$, and put $\frac{1}{q} := \frac{1}{p} + \frac{s'-2\beta}{n}$. For every $g \in \L^{q}(\R^n)$ there holds
\begin{align*}
 |\langle \Lop_{\beta,B} g, v \rangle| \lesssim \|B\|_\infty \|g\|_{q} \semi{v}_{s',p'} \qquad (v \in \W^{s',p'}(\R^n)).
\end{align*}
\end{cor}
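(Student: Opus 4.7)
The plan is to reduce the bilinear estimate to a linear one in $g$ by symmetrising away the difference $g(x)-g(y)$, and then invoke Lemma~\ref{lem:Mixed embedding} for the remaining singular integral in $v$.

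\textbf{Step 1: symmetrisation.} Starting from the formula for $\langle \Lop_{\beta,B}g, v\rangle$, I bound $|B(x,y)| \leq \|B\|_\infty$ pointwise and use the triangle inequality $|g(x)-g(y)| \leq |g(x)| + |g(y)|$ inside the double integral. After swapping $x$ and $y$ in the term carrying $|g(y)|$ (using that $|x-y|^{n+2\beta}$ and $|v(x)-v(y)|$ are symmetric), both contributions coincide, giving
\begin{align*}
 |\langle \Lop_{\beta,B} g, v\rangle| \leq 2\|B\|_\infty \int_{\R^n} |g(x)| \bigg(\int_{\R^n} \frac{|v(x)-v(y)|}{|x-y|^{n+2\beta}}\, \d y\bigg)\, \d x.
\end{align*}

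\textbf{Step 2: H\"older in the outer variable.} Let $q'$ be defined by $1/q' := 1/p' - (s'-2\beta)/n$ as in Lemma~\ref{lem:Mixed embedding}. A quick check shows that $1/q + 1/q' = 1/p + 1/p' = 1$, so $q$ and $q'$ are genuine conjugate exponents, and the assumptions $s' > 2\beta$ and $s'p' < n$ ensure that $q' \in (1,\infty)$ (hence $q \in (1,\infty)$). Applying H\"older's inequality in the $x$-variable with these exponents bounds the above by
\begin{align*}
 2\|B\|_\infty \|g\|_{q} \left(\int_{\R^n}\bigg(\int_{\R^n} \frac{|v(x)-v(y)|}{|x-y|^{n+2\beta}}\, \d y\bigg)^{q'}\, \d x\right)^{1/q'}.
\end{align*}

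\textbf{Step 3: apply the mixed embedding.} The second factor is exactly the quantity controlled by Lemma~\ref{lem:Mixed embedding} by $\semi{v}_{s',p'}$, with a constant depending only on $n$, $s'$, $p'$, and $\beta$. Combining the three steps yields the claim.

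The argument is essentially bookkeeping; the only delicate point (absorbed into Step~2) is verifying that the exponent $q'$ needed for Lemma~\ref{lem:Mixed embedding} is conjugate to the $q$ in the statement, which is forced by the relation $1/p + 1/p' = 1$. No further obstacle is expected.
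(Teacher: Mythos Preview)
Your proof is correct and follows essentially the same approach as the paper's: both arguments bound $|g(x)-g(y)|\leq |g(x)|+|g(y)|$, symmetrise to reduce to a single term involving $|g(x)|$ times the inner integral in $v$, apply H\"older's inequality with the conjugate pair $(q,q')$, and finish with Lemma~\ref{lem:Mixed embedding}. The only cosmetic difference is that the paper keeps $B$ inside during the swap (producing $|B(x,y)|+|B(y,x)|\leq 2\|B\|_\infty$) whereas you pull out $\|B\|_\infty$ first; the outcome is identical.
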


\begin{proof}
We crudely bound $|g(x) - g(y)| \leq |g(x)| + |g(y)|$ in the integral representation for $\langle \Lop_{\beta,B} g, v \rangle$ and apply Tonelli's theorem to give
\begin{align*}
 |\langle \Lop_{\beta,B} g, v \rangle| 
 &\leq \int_{\R^n} |g(x)| \bigg(\int_{\R^n} \frac{|v(x) - v(y)|}{|x-y|^{n + 2 \beta}} \cdot (|B(x,y)| + |B(y,x)|) \, \d y \bigg) \, \d x \\
 &\leq 2 \|B\|_\infty \|g\|_{q} \left(\int_{\R^n} \bigg( \int_{\R^n} \frac{|v(x) - v(y)|}{|x-y|^{n + 2 \beta}} \, \d y \bigg)^{q'} \, \d x \right)^{1/q'},
\end{align*}
the second step being due to H\"older's inequality. Since the H\"older conjugate of $q$ is the exponent $q'$ appearing in Lemma~\ref{lem:Mixed embedding}, the claimed inequality follows from that very lemma.
\end{proof}

\subsection{Proof of a global higher differentiability and integrability result}

Combining Proposition~\ref{prop:invert} with the mapping properties found in the previous section, we can prove our main self-improvement property for weak solutions of \eqref{eq:nonloc} . As in \cite{KMS}, we impose the additional restriction $2 \beta - \alpha < 1$ in the case that $\beta > \alpha$.

\begin{thm}
\label{thm:main2}
There exists $\eps > 0$, depending only on $\lambda, n, \alpha, \beta$ with the following property. Suppose $s \in (\alpha,1)$ and $p \in [2,\infty)$ satisfy $|s-\alpha|, |p-2| < \eps$.
%and let $s+s' = 2 \alpha$ and $1/p + 1/p' = 1$. 
If $u \in \W^{\alpha,2}(\R^n)$ is a weak solution to \eqref{eq:nonloc}, then the following conditions guarantee $u \in \W^{s,p}(\R^n)$:
\begin{align*}
 f \in \L^r(\R^n), \qquad \frac{1}{r} = \frac{1}{p} + \frac{2\alpha -s}{n}
\end{align*}
and 
\begin{align*}
 g \in \L^q(\R^n), \qquad \frac{1}{q} = \frac{1}{p} + \frac{2 \alpha - 2 \beta -s}{n} \quad \text{if $2 \beta < \alpha$},
\end{align*}
or
\begin{align*}
 g \in \W^{2 \beta - 2 \alpha + s,p}(\R^n) \quad \text{if $0 \leq 2\beta - \alpha < 1$}.
\end{align*}
Moreover, there is an estimate
\begin{align*}
 \|u\|_{s,p} \lesssim \|u\|_{\alpha,2} + \|f\| + \|g\|,
\end{align*}
where the norms of $f$ and $g$ are with respect to the function spaces specified above and the implicit constant depends on $\lambda, n, \alpha, \beta,s,p$ and $ \|B\|_{\infty}$.
\end{thm}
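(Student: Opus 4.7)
The organising identity is $(1+\Lop_{\alpha,A})u = F$, where
\begin{align*}
F := u + \Lop_{\beta,B}g + f.
\end{align*}
The plan is to regard this as an equation between distributions, interpret it in a Sobolev anti-dual slightly off the reference space $\W^{\alpha,2}(\R^n)^*$, and let Proposition~\ref{prop:invert} do the work. Writing $s' := 2\alpha - s$ and $1/p' := 1 - 1/p$, I fix $\eps>0$ no larger than the threshold of Proposition~\ref{prop:invert} and small enough so that for every admissible pair $(s,p)$ all of the following open conditions hold: $s'p' < n$ (needed for Lemma~\ref{lem:Fractional Sobolev}); $p \leq 2^*$ with $1/2^* = 1/2 - \alpha/n$ (so that $\W^{\alpha,2}(\R^n) \hookrightarrow \L^p(\R^n)$); and either $2\beta - s' \in (0,1)$ when $2\beta \geq \alpha$, or $s' > 2\beta$ when $2\beta < \alpha$. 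These are all open conditions around $(\alpha, 2)$, so a joint $\eps$ depending only on $\lambda, n, \alpha, \beta$ exists.

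The first task is to show $F \in \W^{s',p'}(\R^n)^*$ with the bound $\|F\|_{\W^{s',p'}(\R^n)^*} \lesssim \|u\|_{\alpha,2} + \|f\| + \|g\|$, handling the three contributions separately. For the $u$-term, H\"older's inequality and the Sobolev embedding yield $|\int u\,\cl{v}\,\d x| \leq \|u\|_p \|v\|_{p'} \lesssim \|u\|_{\alpha,2} \|v\|_{s',p'}$. For the $f$-term, the exponent $r$ of the hypothesis matches precisely the dual exponent in Lemma~\ref{lem:Fractional Sobolev}, giving $|\int f\,\cl{v}\,\d x| \lesssim \|f\|_r \semi{v}_{s',p'}$. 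For the $g$-term, the hypotheses are tailored to the two available estimates: in the case $2\beta - \alpha \in [0,1)$, Lemma~\ref{lem:Embeddings Lb large} applied at the exponent $2\beta - s' = 2\beta - 2\alpha + s \in (0,1)$ controls $|\langle \Lop_{\beta,B}g, v \rangle|$ by $\|B\|_\infty \semi{g}_{2\beta-s',p} \semi{v}_{s',p'}$; in the case $2\beta < \alpha$, the corresponding quantity is controlled by $\|B\|_\infty \|g\|_q \semi{v}_{s',p'}$ via Corollary~\ref{cor:Embdedding Lb small} applied at the exponent $q$ prescribed in the statement.

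Proposition~\ref{prop:invert} then produces a unique $\tilde u \in \W^{s,p}(\R^n)$ with $(1+\Lop_{\alpha,A}) \tilde u = F$ in $\W^{s',p'}(\R^n)^*$ and with $\|\tilde u\|_{s,p} \lesssim \|F\|_{\W^{s',p'}(\R^n)^*}$. The identification $\tilde u = u$ comes from the consistency clause of that proposition. Indeed, the weak-solution identity gives $\langle (1+\Lop_{\alpha,A})u, \phi \rangle = \langle F, \phi \rangle$ for every $\phi \in \C_0^\infty(\R^n)$; the left-hand side extends continuously to $\W^{\alpha,2}(\R^n)$ by Lemma~\ref{lem:Lax-Milgram}, while the right-hand side extends continuously to $\W^{s',p'}(\R^n)$ by the previous step. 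Density of $\C_0^\infty(\R^n)$ in both $\W^{\alpha,2}(\R^n)$ and $\W^{s',p'}(\R^n)$ (Section~\ref{sec:Notation}) forces $F$ to represent one and the same element of $\W^{\alpha,2}(\R^n)^*\cap \W^{s',p'}(\R^n)^*$, and the consistency clause of Proposition~\ref{prop:invert} yields $u = (1+\Lop_{\alpha,A})^{-1}F = \tilde u$, together with the stated norm estimate.

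The main obstacle is precisely this last identification: $\tilde u$ is produced by the perturbed inverse while $u$ is only known to live in $\W^{\alpha,2}(\R^n)$, so one cannot compare them by a direct uniqueness argument within a single Sobolev space. The bridge is supplied by realising $F$ as one and the same distribution lying simultaneously in two fractional Sobolev anti-duals, which in turn relies on the weak-solution hypothesis together with the common dense subspace $\C_0^\infty(\R^n)$.
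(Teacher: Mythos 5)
Your proof is correct and follows the same strategy as the paper: rewrite the equation as $(1+\Lop_{\alpha,A})u = u + \Lop_{\beta,B}g + f$, show the right-hand side lies in $\W^{s',p'}(\R^n)^*$ using the fractional Sobolev embedding for $u$ and $f$ and the compatibility lemmas for $g$, apply Proposition~\ref{prop:invert}, and identify the outputs of the two inverses through the consistency clause and density of $\C_0^\infty(\R^n)$. The paper compresses the identification step into a remark about density, whereas you spell it out in full, but the content and the chain of cited lemmas are the same.
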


\begin{proof}
As usual we write $s+s' = 2 \alpha$ and $1/p + 1/p' = 1$. We let $\eps>0$ as given by Proposition~\ref{prop:invert}. If we can show $\Lop_{\beta, B} g + f + u \in \W^{s',p'}(\R^n)^*$, upon possibly forcing further restrictions on $\eps$, then by density of $\C_0^\infty(\R^n)$ in the fractional Sobolev spaces we can write the equation for $u$ in the form
\begin{align*}
 (1+ \Lop_{\alpha,A}) u = \Lop_{\beta, B} g + f + u
\end{align*}
and Proposition~\ref{prop:invert} yields $u \in \W^{s,p}(\R^n)$ with bound
\begin{align}
\label{eq1:main2}
 \|u\|_{s,p} \lesssim  \|\Lop_{\beta, B} g + f + u\|_{\W^{s',p'}(\R^n)^*}.
\end{align}

By assumption and Lemma~\ref{lem:Fractional Sobolev} we have $u \in \L^p(\R^n)$ for all $p \in [2,2^*]$ with $1/2^* = 1/2 - \alpha/n$. Note that here we used our assumption $n \geq 2$. For $p$ in this range we write $1/p = (1-\theta)/2 + \theta/2^*$ with $\theta \in (0,1)$ and get for any $s' \in (0,1)$ the bound
\begin{align}
\label{eq2:main2}
 \|u\|_{\W^{s',p'}(\R^n)^*} \leq \|u\|_{p} \leq \|u\|_2^{1-\theta} \|u\|_{2^*}^\theta \lesssim \|u\|_{\alpha,2},
\end{align}
where the second step follows from H\"older's inequality. Next, we have $s'p' < 2 \alpha < 2 \leq n$ (since $s' < \alpha$ and $p' < 2$) and hence Lemma~\ref{lem:Fractional Sobolev} yields $\|f\|_{\W^{s',p'}(\R^n)^*} \lesssim \|f\|_r$. Finally, we consider $\Lop_{\beta,B} g$. 

Suppose first that $2 \beta < \alpha$. Upon taking $\eps$ smaller, we can assume $2 \beta < s'$, in which case $\|\Lop_{\beta,B} g \|_{\W^{s',p'}(\R^n)^*} \lesssim \|g\|_q$ follows from Corollary~\ref{cor:Embdedding Lb small}. If, on the other hand, $2 \beta - \alpha \in [0,1)$, then we can additionally assume $2 \beta -s' \in (0,1)$ and apply Lemma~\ref{lem:Embeddings Lb large} to give $\|\Lop_{\beta,B} g \|_{\W^{s',p'}(\R^n)^*} \lesssim \|g\|_{2 \beta - 2 \alpha + s,p}$. Inserting these estimates on the right-hand side of \eqref{eq1:main2} yields the desired bound for $u$.
\end{proof}

\subsection{Comparison to earlier results}

As a consequence of our method, the exponents $s$ and $p$ for the higher differentiability and integrability of $u$ in Theorem~\ref{thm:main2} are precisely related to the assumptions on $f$ and $g$. As far as more qualitative results are concerned, this is by no means necessary since the following fractional Sobolev embedding allows for some play with the exponents.

\begin{lem}[{\cite[Thm.~6.2.4/6.5.1]{Bergh-Loefstroem}}]
\label{lem:Fractional Sobolev 2}
Let $s_0,s_1, s_2 \in (0,1)$ and $1 < p_0 \leq p_1 < \infty$ satisfy $s_0 - n/p_0 = s_1 - n/p_1$ and $s_2 < s_1$. Then
\begin{align*}
 \W^{s_0, p_0}(\R^n) \subset \W^{s_1, p_1}(\R^n) \subset \W^{s_2, p_1}(\R^n)
\end{align*}
with continuous inclusions.
\end{lem}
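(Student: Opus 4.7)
The plan is to reduce both inclusions to the Besov-space formulation $\W^{s,p}(\R^n) = \B^{s}_{p,p}(\R^n)$ recalled in Section~\ref{sec:Notation} and then to rely on the Littlewood--Paley blocks $\phi_j \ast u$. Once we are in that setting, the first embedding becomes a Bernstein-type inequality and the second is essentially tautological.

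First I would prove the inclusion $\W^{s_1,p_1}(\R^n)\subset\W^{s_2,p_1}(\R^n)$, which is the easier one: since $s_2 < s_1$ and $j \geq 0$ we have $2^{js_2 p_1}\leq 2^{js_1 p_1}$, so the Besov-norm definition \eqref{BesovNorm} directly gives $\|u\|_{\B^{s_2}_{p_1,p_1}} \leq \|u\|_{\B^{s_1}_{p_1,p_1}}$.

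For the embedding $\W^{s_0,p_0}(\R^n)\subset\W^{s_1,p_1}(\R^n)$, the key input is the Bernstein inequality: since $\phi_j \ast u$ has Fourier support contained in a ball of radius $\lesssim 2^j$, a dilation together with Young's inequality yields
\begin{align*}
 \|\phi_j \ast u\|_{p_1} \lesssim 2^{jn(1/p_0 - 1/p_1)} \|\phi_j \ast u\|_{p_0}
\end{align*}
with a constant independent of $j \geq 0$. Inserting this into the Besov norm and using the scaling relation $s_0 = s_1 + n/p_0 - n/p_1$ gives
\begin{align*}
 \|u\|_{\B^{s_1}_{p_1,p_1}}^{p_1} = \sum_{j=0}^\infty 2^{js_1 p_1} \|\phi_j \ast u\|_{p_1}^{p_1} \lesssim \sum_{j=0}^\infty 2^{js_0 p_1} \|\phi_j \ast u\|_{p_0}^{p_1}.
\end{align*}
The remaining step is the elementary monotonicity of $\ell^q$ norms: since $p_0 \leq p_1$, the inequality $(\sum_j a_j^{p_1})^{1/p_1} \leq (\sum_j a_j^{p_0})^{1/p_0}$ applied to $a_j := 2^{js_0}\|\phi_j \ast u\|_{p_0}$ converts the right-hand side into $\|u\|_{\B^{s_0}_{p_0,p_0}}^{p_1}$, as desired.

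I do not expect a serious obstacle here, since the statement is part of the classical Besov/Sobolev embedding machinery. The only point requiring care is the exponent bookkeeping in the first embedding (making sure the Bernstein factor $2^{jn(1/p_0-1/p_1)}$ combines with $2^{js_1}$ to produce exactly $2^{js_0}$) and the use of the $\ell^{p_0}\hookrightarrow\ell^{p_1}$ inclusion to pass from $p_1$-sums to $p_0$-sums; this is precisely where the hypothesis $p_0 \leq p_1$ is decisive.
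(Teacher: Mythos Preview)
Your argument is correct. The second inclusion is indeed immediate from the Besov norm \eqref{BesovNorm}, and for the first inclusion the Bernstein inequality combined with the $\ell^{p_0} \hookrightarrow \ell^{p_1}$ embedding does exactly what you claim; the exponent bookkeeping checks out since $s_1 + n/p_0 - n/p_1 = s_0$ is precisely the hypothesis.

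Note, however, that the paper does not give its own proof of this lemma: it is stated with a direct citation to \cite[Thm.~6.2.4/6.5.1]{Bergh-Loefstroem} and used as a black box. So there is nothing to compare against beyond the reference. Your write-up supplies a self-contained argument via the Littlewood--Paley characterisation already set up in Section~\ref{sec:Notation}, which is the standard route and in fact close to how Bergh--L\"ofstr\"om themselves proceed (their Theorem~6.5.1 is the Besov embedding $\B^{s_0}_{p_0,q} \subset \B^{s_1}_{p_1,q}$ under the same scaling relation, proved via the Nikol'ski\u{\i}/Bernstein inequality, and Theorem~6.2.4 identifies $\W^{s,p} = \B^s_{p,p}$).
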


As a particular example, we obtain a self-improving property more in the spirit of \cite[Thm.~1.1]{KMS}. For this we define the following exponents related to fractional Sobolev embeddings, see Lemma~\ref{lem:Fractional Sobolev}, 
\begin{align}
\label{SobExp2}
 2_{*,\alpha}:= \frac{2n}{n+2 \alpha}, \qquad 2_{*,\alpha-2\beta}:= \frac{2n}{n+2(\alpha-2\beta)},
\end{align}
where the second one will of course only be used when $2 \beta < \alpha$.

\begin{cor}
\label{thm:main}
Let $u \in \W^{\alpha,2}(\R^n)$ be a weak solution to \eqref{eq:nonloc}. Suppose for some $\delta > 0$ there holds $f \in \L^{2_{*,\alpha}+\delta}(\R^n) \cap \L^{2_{*,\alpha}}(\R^n)$ and
\begin{align*}
 g \in \begin{cases}
        \L^{2_{*,\alpha - 2 \beta} + \delta}(\R^n) \cap \L^{2_{*,\alpha - 2 \beta}}(\R^n) & \text {if $2\beta < \alpha$,} \\
        \W^{2\beta - \alpha + \delta,2}(\R^n) & \text{if  $0 \leq 2 \beta - \alpha <1$.}
       \end{cases}
\end{align*}
Then $u \in \W^{s,p}(\R^n)$ for some $s>\alpha$, $p>2$. Moreover, $s$ and $p$ depend only on $\lambda, n, \alpha, \beta$.
\end{cor}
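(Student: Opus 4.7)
The plan is to deduce the corollary directly from Theorem~\ref{thm:main2} by choosing $s$ slightly above $\alpha$ and $p$ slightly above $2$, and verifying that the hypotheses on $f$ and $g$ translate into the quantitative integrability/differentiability requirements demanded by that theorem via standard interpolation and Sobolev embeddings.

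First I would fix the $\eps>0$ produced by Theorem~\ref{thm:main2}, and set $s = \alpha + \eta$, $p = 2+\eta'$ with small parameters $\eta,\eta'>0$ to be chosen. The exponent $r$ defined by $\tfrac{1}{r} = \tfrac{1}{p} + \tfrac{2\alpha - s}{n}$ depends continuously on $(s,p)$ and equals $\tfrac{1}{2_{*,\alpha}}$ at $(\alpha, 2)$; since $r$ increases as $\eta,\eta'$ increase from $0$, we can make $r \in (2_{*,\alpha}, 2_{*,\alpha}+\delta)$ by keeping $\eta,\eta'$ small. Then the inclusion $\L^{2_{*,\alpha}}(\R^n) \cap \L^{2_{*,\alpha}+\delta}(\R^n) \subset \L^{r}(\R^n)$ (a consequence of H\"older's inequality) yields $f \in \L^r(\R^n)$, as required.

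For $g$, I would split into the same dichotomy as in Theorem~\ref{thm:main2}. In the case $2\beta < \alpha$, the exponent $\tfrac{1}{q} = \tfrac{1}{p} + \tfrac{2\alpha - 2\beta - s}{n}$ equals $\tfrac{1}{2_{*,\alpha-2\beta}}$ at $(\alpha,2)$, so exactly the same interpolation argument shows that the hypothesis $g \in \L^{2_{*,\alpha-2\beta}}(\R^n) \cap \L^{2_{*,\alpha-2\beta}+\delta}(\R^n)$ gives $g \in \L^q(\R^n)$ for $\eta,\eta'$ sufficiently small. In the case $0 \leq 2\beta - \alpha < 1$, the condition is $g \in \W^{2\beta - 2\alpha + s,p}(\R^n)$, and here I would invoke Lemma~\ref{lem:Fractional Sobolev 2} to embed the given $\W^{2\beta - \alpha + \delta,2}(\R^n)$ into $\W^{2\beta - 2\alpha + s,p}(\R^n)$. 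This requires choosing $s_1,p_1 = p$ on the Sobolev line through $(2\beta - \alpha + \delta, 2)$ and then checking $2\beta - 2\alpha + s < s_1$, i.e.\ $(s-\alpha) + n\bigl(\tfrac{1}{2} - \tfrac{1}{p}\bigr) < \delta$, which again holds for $\eta,\eta'$ small.

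Fixing such a choice of $\eta,\eta'$ (depending only on $\lambda,n,\alpha,\beta,\delta$) and applying Theorem~\ref{thm:main2} then yields $u \in \W^{s,p}(\R^n)$ with $s>\alpha$ and $p>2$. The main technical point — perhaps the only subtle one — is to verify that the parameter-window provided by Theorem~\ref{thm:main2} and the continuity of the Sobolev exponents can be reconciled simultaneously for both $f$ and $g$, especially in the case $2\beta \geq \alpha$ where one must combine the differentiability and integrability losses in the right direction. Since all of the underlying inclusions are monotone in $\eta,\eta'$, this reconciliation is straightforward and the argument reduces to choosing $\eta,\eta'$ sufficiently small.
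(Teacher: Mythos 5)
Your argument is correct and follows essentially the same route as the paper: fix the window $\eps$ from Theorem~\ref{thm:main2}, take $s,p$ slightly above $\alpha,2$ so that the exponents $r,q$ land in the admissible intervals by log-convexity of Lebesgue norms, and in the case $0\leq 2\beta-\alpha<1$ invoke Lemma~\ref{lem:Fractional Sobolev 2} to get the embedding $\W^{2\beta-\alpha+\delta,2}(\R^n)\subset\W^{2\beta-2\alpha+s,p}(\R^n)$. You spell out the embedding condition $(s-\alpha)+n(\tfrac12-\tfrac1p)<\delta$ more explicitly than the paper, but the idea is identical.
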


\begin{proof}[Proof of Corollary~\ref{thm:main}]
Throughout, we will have $s \in (\alpha,1)$ and $p \in [2,\infty)$. We consider the case $2 \beta < \alpha$ first. By the log-convexity of the Lebesgue space norms we may lower the value $\delta>0$ as we please and still have the respective assumptions on $f$ and $g$. On the other hand, the exponents in Theorem~\ref{thm:main2} satisfy $r>2_{*,\alpha}$ and $q>2_{*,\alpha-2\beta}$ and in the limits $s \to \alpha$ and $p \to 2$ we get equality. Hence, we can apply Theorem~\ref{thm:main2} with some choice of $s>\alpha$ and $p>2$ and the claim follows. 

It remains to deal with the assumption on $g$ in the case $2\beta - \alpha \in [0,1)$. But according to Lemma~\ref{lem:Fractional Sobolev 2} we can find $s>\alpha$ and $p>2$ arbitrarily close to $\alpha$ and $2$, respectively, such that $\W^{2\beta - \alpha + \delta,2}(\R^n) \subset \W^{2 \beta - 2\alpha + s,p}(\R^n)$ holds with continuous inclusion and again $u \in \W^{s,p}(\R^n)$ follows by Theorem~\ref{thm:main2}.
\end{proof}

As another application we reproduce the main result in \cite{BR} concerning the non-local elliptic equation
\begin{align*}
 \Lop_{\alpha,A} u = f
\end{align*}
with $f \in \L^2(\R^n)$. We note that this corresponds to taking $g=0$ in the general equation \eqref{eq:nonloc}. Hence, the entire Section~\ref{subsec:compatibility} could be skipped except for the first lemma, thereby making the argument up to this stage particularly simple.

\begin{cor}
\label{cor:Bass-Ren}
Let $f \in \L^2(\R^n)$ and let $u \in \W^{\alpha,2}(\R^n)$ be a weak solution to $\Lop_{\alpha,A} u = f$. Then
\begin{align*}
 \Gamma u(x) := \bigg( \int_{\R^n} \frac{|u(x)-u(y)|^2}{|x-y|^{n+2\alpha}} \, \d y \bigg)^{1/2}
\end{align*}
satisfies for some $p>2$ and a constant $c$ both depending only on $\lambda, n, \alpha$,
\begin{align*}
 \|\Gamma u \|_p \leq c(\|u\|_2 + \|f\|_2).
\end{align*}
\end{cor}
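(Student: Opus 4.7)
The plan is to combine Proposition~\ref{prop:invert} with a Minkowski inequality trick to obtain the conclusion. Concretely, I would first show that $u \in \W^{s,p}(\R^n)$ for some $s > \alpha$ and $p > 2$ with
\begin{equation*}
\|u\|_{s,p} \leq C(\|u\|_2 + \|f\|_2),
\end{equation*}
and then bound $\|\Gamma u\|_p$ by $\|u\|_{s,p}$ via an elementary Besov embedding. Note that Theorem~\ref{thm:main2} cannot be invoked as a black box here: it demands $f \in \L^r(\R^n)$ with $r$ close to $2_{*,\alpha} < 2$, which is not implied by $f \in \L^2(\R^n)$.

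For the first step, fix $s > \alpha$ and $p > 2$ close enough to $\alpha$ and $2$ that Proposition~\ref{prop:invert} applies, and let $s + s' = 2\alpha$, $1/p + 1/p' = 1$. Since $s' \approx \alpha > 0$ and $p' < 2$ is close to $2$, the fractional Sobolev inequality combined with log-convex interpolation between $\L^{p'}$ and $\L^{p'^*}$ yields $\W^{s',p'}(\R^n) \hookrightarrow \L^2(\R^n)$ (the required inequality $s' \geq n(1/p' - 1/2)$ holds with margin). Dualising, $\L^2(\R^n) \hookrightarrow \W^{s',p'}(\R^n)^*$, so that both $u$ and $f$ lie in $\W^{s',p'}(\R^n)^*$ with norms $\leq C(\|u\|_2 + \|f\|_2)$. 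Rewriting the equation as $(1+\Lop_{\alpha,A})u = u+f$ and invoking the consistency of the $(s,p)$-inverse with the Lax-Milgram inverse at $(\alpha, 2)$ provided by Proposition~\ref{prop:invert}, the original solution $u$ coincides with $(1+\Lop_{\alpha,A})^{-1}(u+f) \in \W^{s,p}(\R^n)$ and the bound on $\|u\|_{s,p}$ follows.

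For the second step, change variables $y = x+h$ in the definition of $\Gamma u$ and apply Minkowski's integral inequality to the outer $\L^{p/2}$-norm (permissible since $p/2 \geq 1$):
\begin{equation*}
\|\Gamma u\|_p^2 = \bigg\|\int_{\R^n} \frac{|u(\cdot) - u(\cdot+h)|^2}{|h|^{n+2\alpha}} \, \d h \bigg\|_{p/2} \leq \int_{\R^n} \frac{\|u - u(\cdot+h)\|_p^2}{|h|^{n+2\alpha}} \, \d h.
\end{equation*}
Up to a lower-order multiple of $\|u\|_p^2$, the right-hand side is the square of the Gagliardo-type semi-norm characterising the Besov space $\B^{\alpha}_{p,2}(\R^n)$. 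Since $s > \alpha$, the standard Besov embedding $\B^{s}_{p,p}(\R^n) \hookrightarrow \B^{\alpha}_{p,2}(\R^n)$ (a strict gain in smoothness swallows any fine-index mismatch) together with $\W^{s,p} = \B^{s}_{p,p}$ from Section~\ref{sec:Notation} yields $\|\Gamma u\|_p \lesssim \|u\|_{s,p} \lesssim \|u\|_2 + \|f\|_2$. The genuine difficulty here is the Minkowski step, which is what allows the Triebel--Lizorkin-flavoured quantity $\|\Gamma u\|_p$ to be estimated by a Besov-type object that fits the scale on which Proposition~\ref{prop:invert} operates; the inequality $p \geq 2$ is essential to make this conversion possible.
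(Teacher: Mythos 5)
Your proof is correct, but it takes a genuinely different route from the paper's in both halves of the argument, and you are right that Theorem~\ref{thm:main2} cannot be invoked as a black box here. For the inversion step, you place both $u$ and $f$ into $\W^{s',p'}(\R^n)^*$ at once by dualising the single Sobolev embedding $\W^{s',p'}(\R^n)\hookrightarrow\L^2(\R^n)$, which immediately gives $\|u\|_{s,p}\lesssim\|u\|_2+\|f\|_2$. The paper instead decomposes $f=f\mathbf{1}_{\{|f|<\|f\|_2\}}+f\mathbf{1}_{\{|f|\geq\|f\|_2\}}\in\L^p(\R^n)+\L^r(\R^n)$, bounds $\|u\|_{\W^{s',p'}(\R^n)^*}$ by $\|u\|_{\alpha,2}$ via \eqref{eq2:main2}, and then invokes the coercivity estimate $\lambda\semi{u}_{\alpha,2}^2\leq\tfrac12(\|u\|_2^2+\|f\|_2^2)$ to reach $\|u\|_2+\|f\|_2$; your route bypasses both the splitting of $f$ and the energy estimate. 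The contrast is larger in the second half: you apply Minkowski's integral inequality in the $\L^{p/2}$-norm (legitimate since $p\geq 2$) to dominate $\|\Gamma u\|_p^2$ by $\int_{\R^n}\|u-u(\cdot+h)\|_p^2\,|h|^{-n-2\alpha}\,\d h$, recognise this quantity (up to $\|u\|_p^2$) as the $\B^\alpha_{p,2}(\R^n)$-norm given by first differences, and finish with the lossy embedding $\B^s_{p,p}(\R^n)\hookrightarrow\B^\alpha_{p,2}(\R^n)$, available because the strict gain $s>\alpha$ allows the fine index to change freely. The paper instead splits the defining integral of $\Gamma u$ at $|x-y|=1$, controls the far piece pointwise by the Hardy--Littlewood maximal function and the near piece by H\"older's inequality with exponent $p/2$ in the $y$-variable; this forces the specific choice $n/2-n/p=s-\alpha$ so that the resulting exponent $np/2+p\alpha$ equals $n+sp$. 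Your argument is more flexible, as it needs no particular line in the $(s,p)$-plane, and more conceptual, at the cost of invoking the difference characterisation of Besov spaces with $q\neq p$ together with a cross-fine-index embedding; the paper's argument stays within the elementary toolbox (maximal function, H\"older) already used in Section~\ref{subsec:compatibility}. Either way the constants depend only on $\lambda, n, \alpha$.
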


\begin{proof}
We use the notation introduced in Theorem~\ref{thm:main2} and write as usual $s+s' = 2 \alpha$, $1/p + 1/p' = 1$. According to Lemma~\ref{lem:Fractional Sobolev} we have $\L^r(\R^n) \subset \W^{s',p'}(\R^n)^*$ with continuous inclusion and if $s$ and $p$ are sufficiently close to $\alpha$ and $2$, respectively, then we have $r < 2$. Obviously, we also have $\L^{p}(\R^n) \subset \W^{s',p'}(\R^n)^*$ and $p > 2$. Hence, by virtue of the splitting 
\begin{align*}
 f = f \cdot \mathbf{1}_{\{|f| < \|f\|_2\}} + f \cdot \mathbf{1}_{\{|f| \geq \|f\|_2\}} \in \L^{p}(\R^n) + \L^{r}(\R^n)
\end{align*}
we obtain $f \in \W^{s',p'}(\R^n)^*$ with bound $\|f\|_{\W^{s',p'}(\R^n)^*} \lesssim \|f\|_2$. Here $\mathbf{1}_E$ denotes the indicator function of the set $E \subset \R^n$. Moreover, $\|u\|_{\W^{s',p'}(\R^n)^*} \lesssim \|u\|_{\alpha,2}$, see \eqref{eq2:main2}, and thus we can follow the first part of the proof of Theorem~\ref{thm:main2} in order to find $s>\alpha$, $p>2$, and implicit constants depending only on the above mentioned parameters, such that
\begin{align*}
 \|u\|_{s,p} \lesssim  \|f\|_2 + \|u\|_{\alpha,2}.
\end{align*}
The pair $(s,p)$ could be chosen anywhere in the $(s,p)$-plane close to $(\alpha,2)$ but for a reason that will become clear later one, we shall impose the relation 
\begin{align}
\label{eq1:Bass-Ren}
 \frac{n}{2} - \frac{n}{p} = s- \alpha.
\end{align}
Quasi-coercivity of the form associated with $\Lop_{\alpha,A}$ along with the equation for $u$ yield
\begin{align*}
 \lambda  \semi{u}_{\alpha,2}^2 \leq |\cE_{\alpha,A}(u,u)| = \bigg|\int_{\R^n} f \cdot \cl{u} \, \d x \bigg| \leq \frac{1}{2}(\|u\|_2^2 + \|f\|_2^2),
\end{align*}
and thus it suffices to prove the estimate $\|\Gamma u\|_p \lesssim \|u\|_{s,p}$ to conclude. 

To this end, we split $\Gamma u(x) = \Gamma_1 u (x) + \Gamma_2 u (x)$ according to whether or not $|x-y| > 1$ in the defining integral. Repeating the argument to deduce \eqref{eq3:Mixed embedding}, we obtain
\begin{align*}
 |\Gamma_1 u(x)| = \bigg( \int_{|x-y|>1} \frac{|u(x) - u(y)|^2}{|x-y|^{n + 2 \alpha}} \, \d y \bigg)^{1/2} \lesssim M(|u|^2)(x)^{1/2}
\end{align*}
and as $p>2$, we conclude $\|\Gamma_1 u\|_p \lesssim \|u\|_p$ from the boundedness of the maximal operator on $\L^{p/2}(\R^n)$. As for the other piece, we use H\"older's inequality with exponent $p/2$ on the integral in $y$, to give
\begin{align*}
 \|\Gamma_2\|_p 
 &\lesssim \bigg(\int_{\R^n} \int_{|x-y|<1} \frac{|u(x) - u(y)|^p}{|x-y|^{np/2 + p \alpha}} \, \d y \, \d x \bigg)^{1/p} \leq [u]_{s,p},
\end{align*}
where in the final step we used that $np/2 + p \alpha = n + sp$ holds thanks to \eqref{eq1:Bass-Ren}.
\end{proof}
%%%%%%%%%%%%%%%%%%%%%%%%%%%%%%%%%%%%%%%%%%%%%%%%%%%%%%%%%%%%%%%%%%%%%%%%%%%%%%%%%%%%%%%%%%%%%%%%%%%%%%%%%%%%%%%%%%%%%%%%%%%%%%%%%%%%%%%%%%%%%%%%%%%%%%%%%%%%%%%%%%%%%%%%
\section{Local results}
\label{sec:local}

In Theorem~\ref{thm:main2} and Corollary~\ref{thm:main}, we have obtained global improvements of regularity for solutions to \eqref{eq:nonloc} under global assumptions on the right-hand side. We now discuss some local analogues of this phenomenon. In order to formulate our main result in this direction, we define for balls $B \subset \R^n$ a local version of the fractional Sobolev norm by 
\begin{align*}
 \|u\|_{\W^{s,p}(B)}:= \bigg(\int_{B} |u(x)|^p \, \d x \bigg)^{1/p} + \bigg(\iint_{B \times B} \frac{|u(x) - u(y)|^p}{|x-y|^{n+sp}} \, \d x \, \d y \bigg)^{1/p} 
\end{align*}
and write $u \in \W^{s,p}(B)$ provided this quantity is finite. 

\begin{thm}
\label{thm:local}
There exists $\eps > 0$, depending only on $\lambda, n, \alpha, \beta$ with the following property. Suppose $s \in (\alpha,1)$ and $p \in [2,\infty)$ satisfy $|s-\alpha|, |p-2| < \eps$. Let $u \in \W^{\alpha,2}(\R^n)$ be a weak solution to \eqref{eq:nonloc} and let $B \subset \R^n$ be a ball. Then the following conditions guarantee $u \in \W^{s,p}(B')$ for every ball $B' \subset \subset B$:
\begin{align*}
 f \in \L^r(B) \quad \text{for some $r$ with} \quad \frac{1}{r} \leq \frac{1}{p} + \frac{2\alpha -s}{n}
\end{align*}
and 
\begin{align*}
 g \in \L^q(B) \cap \L^t(\R^n) \quad \text{for some $q,t$ with} \quad \frac{1}{q} \leq \frac{1}{p} + \frac{2 \alpha - 2 \beta -s}{n}, \quad \frac{1}{p} \leq \frac{1}{t} < \frac{1}{p} + \frac{2 \alpha -s}{n} \quad \text{if $2 \beta < \alpha$},
\end{align*}
or
\begin{align*}
 g \in \W^{2 \beta - 2 \alpha + s,p}(\R^n) \quad \text{if $0 \leq 2\beta - \alpha < 1$}.
\end{align*}
\end{thm}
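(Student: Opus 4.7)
The plan is to reduce Theorem~\ref{thm:local} to the global Theorem~\ref{thm:main2} via localization by a smooth cutoff. The decisive technical point will be the control of the commutator generated in the process.

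First, I fix intermediate balls $B' \subset\subset B'' \subset\subset B$ and choose $\eta \in \C_0^\infty(B)$ with $\eta \equiv 1$ on $B''$ and $0 \leq \eta \leq 1$, and set $v := \eta u \in \W^{\alpha,2}(\R^n)$. Since $\eta = 1$ on $B'$, one immediately has $\|u\|_{\W^{s,p}(B')} \leq \|v\|_{s,p}$, so it suffices to establish $v \in \W^{s,p}(\R^n)$. To derive the equation satisfied by $v$, a direct algebraic expansion of the numerator verifies the commutator identity
\[
\cE_{\alpha,A}(\eta u, \phi) - \cE_{\alpha,A}(u, \eta \phi) = T^A_\eta(u, \phi), \qquad \phi \in \C_0^\infty(\R^n),
\]
where
\[
T^A_\eta(u, \phi) := \iint_{\R^n \times \R^n} A(x,y)\, \frac{(\eta(x) - \eta(y))\bigl(u(y)\overline{\phi(x)} - u(x)\overline{\phi(y)}\bigr)}{|x-y|^{n+2\alpha}}\, \d x\, \d y,
\]
and analogously for $\cE_{\beta,B}$. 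Testing the equation for $u$ against $\eta \phi$ and inserting these identities produces
\[
\cE_{\alpha,A}(v, \phi) = \cE_{\beta,B}(\eta g, \phi) + \int_{\R^n} (\eta f) \cdot \overline{\phi}\, \d x + T^A_\eta(u, \phi) - T^B_\eta(g, \phi).
\]

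Second, I verify that the right-hand side above, plus the zeroth-order correction $\langle v, \phi\rangle$, defines an element of $\W^{s',p'}(\R^n)^*$ for parameters $s+s'=2\alpha$ and $1/p+1/p'=1$ in the range allowed by Proposition~\ref{prop:invert}. The two principal terms $\eta f$ and $\cE_{\beta,B}(\eta g, \cdot)$ are treated exactly as in the proof of Theorem~\ref{thm:main2}: since $\supp\eta \subset B$, the local integrability hypotheses on $f$ and $g$ yield global integrability of $\eta f$ and $\eta g$ (the slack in the exponent inequalities from the present statement reduces, via H\"older's inequality on the finite-measure ball $B$, to the equality case of Theorem~\ref{thm:main2}), and $\eta g$ inherits the required global fractional Sobolev regularity in the second dichotomy, as multiplication by a smooth compactly supported function preserves $\W^{s,p}(\R^n)$. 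The $v$-contribution is absorbed as in the estimate leading to~\eqref{eq2:main2}.

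The main obstacle is bounding the commutators $T^A_\eta(u, \cdot)$ and $T^B_\eta(g, \cdot)$ in $\W^{s',p'}(\R^n)^*$. The key observation is that $\eta$ is Lipschitz with compact support, so $|\eta(x) - \eta(y)| \lesssim \min\{|x-y|,1\}$ and the effective kernel is strictly less singular than the original. Splitting the double integral at $|x-y|=1$: the near-diagonal piece is dominated by $|x-y|^{1-n-2\alpha}$, integrable in one variable when $2\alpha < 1$, so H\"older's inequality yields control by the $\L^p(\R^n)$-norms of $u$ (via $\W^{\alpha,2} \subset \L^{2^*}$) and of $g$, the latter accounting for the global assumption $g \in \L^t(\R^n)$ in the statement, which is needed because the commutator reaches globally; the far piece has kernel $\lesssim |x-y|^{-n-2\alpha}$ integrable at infinity and yields an analogous estimate. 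For $2\alpha \geq 1$ one further exploits the Taylor representation $\eta(x) - \eta(y) = (x-y) \cdot \int_0^1 \nabla \eta(y + r(x-y))\, \d r$ to recognize the near-diagonal piece as a Calder\'on--Zygmund-type commutator, bounded on $\L^p(\R^n)$ for $p \in (1,\infty)$ with norm depending on $\|\nabla\eta\|_\infty$. Once these bounds are established uniformly for $s$ near $\alpha$ and $p$ near $2$, Proposition~\ref{prop:invert} applied to $(1 + \Lop_{\alpha,A}) v = v + (\text{RHS})$ yields $v \in \W^{s,p}(\R^n)$, and the theorem follows.
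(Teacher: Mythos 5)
Your high-level strategy is identical to the paper's: truncate by a cutoff, derive a commutator identity, and feed the localized equation into the global machinery via Proposition~\ref{prop:invert}. Your commutator identity is also correct, and in fact your $T^A_\eta(u,\phi)$ (with integrand proportional to $(\eta_x - \eta_y)(u_y\overline{\phi_x} - u_x\overline{\phi_y})$) is precisely the middle display in the paper's factorization~\eqref{eq:factorization}. The treatment of $\eta f$, $\eta g$ (via H\"older on the finite-measure ball and reduction to Theorem~\ref{thm:main2}) and of the $v$-contribution is sound.

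The gap is in the estimation of the commutator for $\alpha \geq 1/2$ (and analogously for $T^B_\eta$ when $\beta \geq 1/2$). You bound $|\eta(x)-\eta(y)| \lesssim \min\{|x-y|,1\}$ and absorb the remaining factor $|u_y\overline{\phi_x} - u_x\overline{\phi_y}|$ by the triangle inequality, leaving the effective kernel $|x-y|^{1-n-2\alpha}$. This kernel is locally integrable only when $2\alpha < 1$, so your near-diagonal Young/H\"older bound works only for $\alpha < 1/2$. For $2\alpha \geq 1$ you invoke a ``Calder\'on--Zygmund-type commutator'' argument, but this is not available here: after Taylor expansion the kernel still carries the factor $A(x,y)$, which is merely bounded measurable and has no smoothness in either variable, so the resulting operator is not a Calder\'on--Zygmund operator and need not be bounded on $\L^p$ (even $\L^2$-boundedness fails for rough kernels of that homogeneity). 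The missing idea is the paper's further algebraic rearrangement, namely the last equality in~\eqref{eq:factorization}:
\begin{align*}
(\eta_x - \eta_y)\bigl(u_y\phi_x - u_x\phi_y\bigr) = -(u_x - u_y)(\eta_x - \eta_y)\phi_y + u_y(\eta_x - \eta_y)(\phi_x - \phi_y),
\end{align*}
which produces two pieces $I$ and $II$, each a product of exactly one \emph{difference quotient} (of $u$ resp.\ $\phi$) with one pointwise factor. This lets one split the exponent $n+2\alpha$ (e.g.\ as $(n/p+s)+(n/p'+s')$) and absorb the full singularity into a fractional Sobolev seminorm of $u$ or $\phi$ rather than integrating it against the cutoff; the cutoff then contributes the uniformly bounded integral~\eqref{eq1:local}. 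This is what makes the argument go through for the full range $\alpha \in (0,1)$. You should also note that for the case $2\beta < \alpha$ the commutator $T^B_\eta(g,\cdot)$ cannot be handled by the $I$/$II$-type decomposition because $g$ has no assumed smoothness; here the paper uses a different split (its $III$ and $IV$) which does rely on the automatic bound $2\beta < 1$, and it is in this part that your crude kernel estimate is the right tool.
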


Again, this gives a precise relation in the exponents, but we also state a more quantitative version. It follows by the exact same reasoning as Corollary~\ref{thm:main} was obtained from Theorems~\ref{thm:main2} in the previous section and we shall not provide further details. We are using again the lower Sobolev conjugates defined in \eqref{SobExp2}.

\begin{cor}
\label{thm:local2}
Let $u \in \W^{\alpha,2}(\R^n)$ be a weak solution to \eqref{eq:nonloc} and let $B \subset \R^n$ be a ball. Suppose for some $\delta > 0$ there holds $f \in \L^{2_{*,\alpha}+\delta}(B)$ and
\begin{align*}
 g \in \begin{cases}
        \L^{2_{*,\alpha - 2 \beta} + \delta}(B) \cap \L^{t}(\R^n) \quad \text{for some $t \in (2_{*,\alpha},2]$}& \text {if $2\beta < \alpha$,} \\
        \W^{2\beta - \alpha + \delta,2}(\R^n) & \text{if  $0 \leq 2\beta - \alpha < 1$.}
       \end{cases}
\end{align*}
Then there exist $s>\alpha$, $p>2$, such that $u \in \W^{s,p}(B')$ for every ball $B' \subset \subset B$. Moreover, $s$ and $p$ depend only on $\lambda, n, \alpha, \beta$.
\end{cor}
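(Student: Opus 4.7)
The plan is to mimic the passage from Theorem~\ref{thm:main2} to Corollary~\ref{thm:main}: pick $(s,p)$ sufficiently close to $(\alpha,2)$ with $s>\alpha$, $p>2$, and check that, upon shrinking $\delta$ if necessary, all hypotheses of Theorem~\ref{thm:local} are satisfied. By log-convexity of $\L^p$-norms we may freely decrease $\delta$, so that the only issue is to match exponents in the limit $(s,p)\to(\alpha,2)$.

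First I would inspect the condition on $f$. As $(s,p)\to(\alpha,2)$, one has
\[
 \frac{1}{p}+\frac{2\alpha-s}{n}\longrightarrow \frac{1}{2}+\frac{\alpha}{n}=\frac{1}{2_{*,\alpha}},
\]
while $\tfrac{1}{2_{*,\alpha}+\delta}<\tfrac{1}{2_{*,\alpha}}$. Hence, by continuity there exist $s>\alpha$ and $p>2$ close to $\alpha$ and $2$ so that $f\in \L^{2_{*,\alpha}+\delta}(B)\subset \L^r(B)$ for some admissible $r$ in Theorem~\ref{thm:local}. The same reasoning applied to $1/q$ handles the local assumption on $g$ in the case $2\beta<\alpha$.

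The genuinely new feature compared with the global Corollary~\ref{thm:main} is the global $\L^t(\R^n)$ assumption on $g$ in the range $2\beta<\alpha$. Here I would use that $t\in(2_{*,\alpha},2]$ gives
\[
 \frac{1}{2}\leq \frac{1}{t}<\frac{1}{2}+\frac{\alpha}{n}.
\]
The lower bound $1/p\leq 1/t$ holds automatically because $p\geq 2\geq t$. The upper bound $1/t<1/p+(2\alpha-s)/n$ follows once more by continuity, since the right-hand side tends to $1/2+\alpha/n>1/t$ as $(s,p)\to(\alpha,2)$. So the pair $(s,p)$ can be chosen so that Theorem~\ref{thm:local} applies with the given $t$.

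Finally, in the case $0\leq 2\beta-\alpha<1$, the hypothesis reads $g\in \W^{2\beta-\alpha+\delta,2}(\R^n)$ whereas Theorem~\ref{thm:local} wants $g\in \W^{2\beta-2\alpha+s,p}(\R^n)$. Applying Lemma~\ref{lem:Fractional Sobolev 2} with $s_0=2\beta-\alpha+\delta$, $p_0=2$, $p_1=p$, and $s_1=s_0+n/p-n/2$ gives $\W^{s_0,2}(\R^n)\subset \W^{s_1,p}(\R^n)$; a further plain embedding yields the inclusion into $\W^{2\beta-2\alpha+s,p}(\R^n)$ provided $s-\alpha<\delta+n/p-n/2$, which can be arranged by choosing $s-\alpha$ and $p-2$ small enough compared to $\delta$. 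With these choices in place, Theorem~\ref{thm:local} delivers $u\in \W^{s,p}(B')$ for every $B'\subset\subset B$, and $s,p$ depend only on $\lambda,n,\alpha,\beta$ through the analogous dependence in that theorem. The only mildly delicate step in the plan is the bookkeeping of the three scalar constraints on $t$ above, but each of them reduces to an open strict inequality at the endpoint $(s,p)=(\alpha,2)$.
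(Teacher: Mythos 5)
Your proposal is correct and takes essentially the same approach the paper indicates (the paper itself supplies no details, only remarking that Corollary~\ref{thm:local2} ``follows by the exact same reasoning as Corollary~\ref{thm:main} was obtained from Theorem~\ref{thm:main2}''). Your continuity-at-the-endpoint bookkeeping for $r$, $q$, $t$ and the use of Lemma~\ref{lem:Fractional Sobolev 2} for the case $0\leq2\beta-\alpha<1$ are precisely that reasoning, with the bounded ball replacing log-convexity for the local integrability steps.
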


These statements are astonishingly local in that the assumption on $f$ and part of that for $g$ are only on the ball where we want to improve the regularity of $u$. To the best of our knowledge this has not been noted before. In particular, if $f$ and $g$ satisfy the assumption for every ball $B$, then the conclusion for $u$ holds for every ball $B'$. This is the result in \cite{KMS}. (Except that they suppose global integrability of exponent $t=2_{*,\alpha - 2 \beta} + \delta$ instead, which for large $\delta$ is not comparable with the condition in Corollary \ref{thm:local2}. It is possible to modify our argument to work in the setting of \cite{KMS} as well, but we leave this extension to interested reader, see Remark \ref{remark:localmod}.)

For the proof of Theorem~\ref{thm:local} it is instructive to recall a simple connection between the condition $\chi u \in \W^{s,p}(\R^n)$ for some $\chi \in \C_0^\infty(B)$ and the fractional Sobolev norm $\|\cdot\|_{\W^{s,p}(B)}$: On the one hand, denoting by $d>0$ the distance between the support of $\chi$ and ${}^cB$ we obtain from the mean value theorem,
\begin{equation}\label{cutoffsob1.eq}
\begin{split}
\bigg(\iint_{\R^n \times \R^n}  \frac{| (\chi u)(x) - (\chi u)(y)|^p}{|x-y|^{n+sp}} \, \d x \, \d y \bigg)^{1/p}
&\leq 2 \|\chi\|_\infty \bigg(\iint_{B \times B} \frac{|u(x) - u(y)|^p}{|x-y|^{n+sp}} \, \d x \, \d y \bigg)^{1/p} \\
&\quad + 4 \|\chi\|_\infty \bigg(\int_B |u(x)|^p \bigg(\int_{|x-y| \geq d} \frac{1}{|x-y|^{n+sp}} \, \d y\bigg) \, \d x \bigg)^{1/p} \\
&\quad + 2 \|\nabla \chi\|_\infty \bigg(\int_B |u(x)|^p \bigg(\int_B \frac{1}{|x-y|^{n+(s-1)p}} \, \d y\bigg) \, \d x \bigg)^{1/p},
\end{split}
\end{equation} 
where by symmetry and the fact that the integrand is zero when $x, y \not\in \supp(\chi)$, we can assume $x \in \supp(\chi)$ and then distinguish whether or not $y \in B$. As $s>0$ and $s-1<0$, the second and third terms are finite. Hence, we see that $u \in \W^{s,p}(B)$ implies $\chi u \in \W^{s,p}(\R^n)$. On the other hand, if $\chi = 1$ on a smaller ball $B' \subset \subset B$, then 
\begin{equation}\label{cutoffsob2.eq}
 \bigg(\int_{B'} |u(x)|^p \, \d x\bigg)^{1/p} + \bigg(\iint_{B' \times B'} \frac{|u(x) - u(y)|^p}{|x-y|^{n+sp}} \, \d x \, \d y \bigg)^{1/p} \leq \|\chi u\|_{s,p}.
\end{equation}
Due to these observations and the fact that Lebesgue spaces on a ball are ordered by inclusion, we see that Theorem~\ref{thm:local} follows at once from

\begin{lem}
\label{lem:local}
There exists $\eps > 0$, depending only on $\lambda, n, \alpha, \beta$ with the following property. Suppose $s \in (\alpha,1)$ and $p \in [2,\infty)$ satisfy $|s-\alpha|, |p-2| < \eps$. Let $u \in \W^{\alpha,2}(\R^n)$ be a weak solution to \eqref{eq:nonloc} and let $\chi \in \C_0^\infty(\R^n)$. Assume
\begin{align*}
 \chi f \in \L^r(\R^n) \quad \text{with} \quad \frac{1}{r} = \frac{1}{p} + \frac{2\alpha -s}{n}
\end{align*}
and if $2 \beta < \alpha$ assume
\begin{align*}
 \chi g \in \L^q(\R^n), \quad \frac{1}{q} = \frac{1}{p} + \frac{2 \alpha - 2 \beta -s}{n}, \quad \text{and} \quad g \in \L^t(\R^n), \quad \frac{1}{p} \leq \frac{1}{t} < \frac{1}{p} + \frac{2 \alpha - s}{n},
\end{align*}
whereas if $0 \leq 2\beta - \alpha < 1$ assume $g \in \W^{2 \beta - 2 \alpha + s,p}(\R^n)$. Then $\chi u \in \W^{s,p}(\R^n)$.
\end{lem}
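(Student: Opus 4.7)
The plan is to derive an equation on $\R^n$ satisfied by $v := \chi u$ and then invoke Proposition~\ref{prop:invert}. Membership $v \in \W^{\alpha,2}(\R^n)$ is immediate from \eqref{cutoffsob1.eq}. For $\phi \in \C_0^\infty(\R^n)$, the product-rule identity
\[
\chi(x)u(x)-\chi(y)u(y) = \tfrac12(\chi(x)+\chi(y))(u(x)-u(y)) + \tfrac12(u(x)+u(y))(\chi(x)-\chi(y))
\]
(and its analogue for $\chi\phi$), together with the reality of $\chi$, yields the commutator decomposition
\[
\cE_{\alpha,A}(\chi u,\phi) = \cE_{\alpha,A}(u,\chi\phi) + \mathcal{R}_\alpha[\chi;u](\phi),
\]
where every integrand of $\mathcal{R}_\alpha$ carries the factor $\chi(x)-\chi(y)$. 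Since $\chi\phi\in\C_0^\infty(\R^n)$ is admissible in Definition~\ref{def:weak sol}, $\cE_{\alpha,A}(u,\chi\phi) = \cE_{\beta,B}(g,\chi\phi) + \int \chi f\,\bar\phi\,\d x$, whence
\[
\langle (1+\Lop_{\alpha,A})v,\phi\rangle = \int \chi u\,\bar\phi\,\d x + \int \chi f\,\bar\phi\,\d x + \cE_{\beta,B}(g,\chi\phi) + \mathcal{R}_\alpha[\chi;u](\phi).
\]
By Proposition~\ref{prop:invert}, it suffices to show that the right-hand side extends to a bounded antilinear functional on $\W^{s',p'}(\R^n)$, with $s'=2\alpha-s$ and $1/p+1/p'=1$.

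The term $\chi u$ is handled by the level-set splitting of Corollary~\ref{cor:Bass-Ren} using $u\in\W^{\alpha,2}\hookrightarrow\L^2\cap\L^{2^*}$ and Lemma~\ref{lem:Fractional Sobolev}; the term $\chi f$ lies in $\L^r$ with $1/r=1/p_*$, so Lemma~\ref{lem:Fractional Sobolev} gives it directly. For $\mathcal{R}_\alpha$, one piece contains $u(x)+u(y)$ and yields, via H\"older with exponents $(p,p')$ in which the smooth cutoff makes $\int|\chi(x)-\chi(y)|^p|x-y|^{-n-ps}\,\d x$ bounded uniformly in $y$, a control by $\|u\|_p\|\phi\|_{s',p'}$; the other piece contains $u(x)-u(y)$ and is bounded via H\"older $(2,2)$ together with the local Sobolev embedding $\W^{s',p'}\hookrightarrow\L^{(p')^*}$ to place $\phi$ in local $\L^2$, giving $[u]_{\alpha,2}\|\phi\|_{s',p'}$. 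The term $\cE_{\beta,B}(g,\chi\phi)$ in the case $0\leq 2\beta-\alpha<1$ is handled by Lemma~\ref{lem:Embeddings Lb large} applied to $v:=\chi\phi$: the multiplier bound $[\chi\phi]_{s',p'}\lesssim_\chi\|\phi\|_{s',p'}$ from \eqref{cutoffsob1.eq} and the identity $2\beta-s'=2\beta-2\alpha+s$ yield a bound by $\|g\|_{\W^{2\beta-2\alpha+s,p}(\R^n)}\|\phi\|_{s',p'}$.

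The main obstacle is bounding $\cE_{\beta,B}(g,\chi\phi)$ in the delicate case $2\beta<\alpha$, where we lack any fractional regularity of $g$. I would decompose
\[
\cE_{\beta,B}(g,\chi\phi) = \cE_{\beta,B}(\chi g,\phi) + \mathcal{R}_\beta[\chi;g](\phi),
\]
with $\mathcal{R}_\beta$ the commutator analogous to $\mathcal{R}_\alpha$. The first piece is treated by Corollary~\ref{cor:Embdedding Lb small} applied to $\chi g\in\L^q$, giving $\|\chi g\|_q\|\phi\|_{s',p'}$. For $\mathcal{R}_\beta$, I would bound $|g(x)-g(y)|\leq|g(x)|+|g(y)|$ to avoid needing any smoothness of $g$; both of its integrands then reduce to bilinear forms in $|g|$ and either $|\phi|$ or $|\phi(x)-\phi(y)|$, with kernel involving $|\chi(x)-\chi(y)|\cdot|x-y|^{-n-2\beta}$. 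For the first type, the crucial observation is that the kernel $K(z):=\min\{|z|,1\}\cdot|z|^{-n-2\beta}$ lies in $\L^1(\R^n)$ thanks to $0<2\beta<\alpha<1$, so Young's inequality combined with the global Sobolev embedding $\W^{s',p'}(\R^n)\hookrightarrow\L^{t'}(\R^n)$---valid by interpolation between $\L^{p'}$ and $\L^{(p')^*}$, as the strict inequality $1/t<1/p+s'/n$ from the hypothesis forces $t'\in[p',(p')^*]$---yields the bound $\|g\|_t\|\phi\|_{s',p'}$. For the second type, H\"older $(t,t')$ pulls out a fractional seminorm $[\phi]_{\sigma,t'}$ with some $\sigma\in(0,2\beta)$, and Lemma~\ref{lem:Fractional Sobolev 2} provides the embedding $\W^{s',p'}\hookrightarrow\W^{\sigma,t'}$ precisely by the same strict inequality on $1/t$. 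Summing, $|\cE_{\beta,B}(g,\chi\phi)|\lesssim_\chi(\|\chi g\|_q+\|g\|_t)\|\phi\|_{s',p'}$, and Proposition~\ref{prop:invert} then yields $\chi u\in\W^{s,p}(\R^n)$.
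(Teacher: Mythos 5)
Your proof is correct and follows essentially the same route as the paper's: a commutator decomposition for $\chi u$ (your symmetric product-rule identity is an algebraic variant of the paper's factorization~\eqref{eq:factorization}, producing the same two kinds of remainder integrands carrying the factor $\chi(x)-\chi(y)$), followed by Proposition~\ref{prop:invert} after placing each term in $\W^{s',p'}(\R^n)^*$ via the local kernel integrability~\eqref{eq1:local}, H\"older's inequality, and the Sobolev embeddings of Lemmas~\ref{lem:Fractional Sobolev} and~\ref{lem:Fractional Sobolev 2}, with the delicate case $2\beta<\alpha$ handled exactly as in the paper by the crude bound $|g(x)-g(y)|\leq|g(x)|+|g(y)|$ together with the embeddings $\W^{s',p'}(\R^n)\hookrightarrow\W^{\sigma,t'}(\R^n)\hookrightarrow\L^{t'}(\R^n)$ guaranteed by the strict inequality on $1/t$. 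The only cosmetic discrepancies are that $\chi u$ needs no level-set splitting (the interpolation of Lebesgue norms as in~\eqref{eq2:main2} suffices directly) and that the strict inequality forces $t'\in[p',(p')^*)$ rather than the closed interval, neither of which affects the argument.
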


The strategy for the proof of this key lemma is as follows. We let $u \in \W^{\alpha,2}(\R^n)$ be a weak solution to \eqref{eq:nonloc} and seek to write down a related fractional equation for $\chi u$ in order to be able to apply Proposition~\ref{prop:invert}. To this end, we note for three functions $u, \chi, \phi$ and $x,y \in \R^n$ the factorization
\begin{align}
\label{eq:factorization}
\begin{split}
&(\chi_x u_x - \chi_y u_y)(\phi_x - \phi_y) \\
&= (\chi_x \phi_x - \chi_y \phi_y)(u_x - u_y) + u_y(\chi_x - \chi_y)\phi_x + u_x (\chi_y-\chi_x) \phi_y \\
&= (\chi_x \phi_x - \chi_y \phi_y)(u_x - u_y) - (u_x - u_y) (\chi_x - \chi_y) \phi_y + u_y (\chi_x - \chi_y) (\phi_x - \phi_y),
\end{split}
\end{align}
where $u_x := u(x)$ and so on for the sake of readability. This identity plugged into the definition of $\cE_{\alpha,A}$, see \eqref{eq:Lopa}, yields
\begin{align*}
 \langle \Lop_{\alpha,A}(\chi u), \phi \rangle 
 &= \langle \Lop_{\alpha,A}u, \chi \phi \rangle + \langle \cR_{\alpha,A,\chi} u,\phi \rangle \qquad (\phi \in \C_0^\infty(\R^n)),
\end{align*}
where
\begin{align*}
%\label{eq:error}
\begin{split}
 \langle  \cR_{\alpha,A,\chi} u,\phi \rangle 
&:= - \iint_{\R^n \times \R^n}  A(x,y) \frac{(u(x)-u(y)) \cdot (\chi(x) - \chi(y))}{|x-y|^{n+2\alpha}} \cl{\phi(y)} \, \d x \, \d y \\
& \qquad   +  \iint_{\R^n \times \R^n} A(x,y) u(y) \frac{ (\chi(x) - \chi(y)) \cdot \cl{(\phi(x) - \phi(y))}   }{|x-y|^{n+2\alpha}}   \, \d x \, \d y
\end{split}
\end{align*}
provided all integrals are absolutely convergent. We shall check that in the proofs below. Of course, a similar calculation applies to $\Lop_{\beta,B}$. Therefore $\chi u \in \W^{\alpha,2}(\R^n)$ solves the non-local elliptic equation
\begin{align}
\label{eq:localized equation}
(1+\Lop_{\alpha,A})(\chi u) = \cR_{\alpha,A,\chi} u - \cR_{\beta,B,\chi} g + \chi u + \Lop_{\beta,B}(\chi g) + \chi f
\end{align}

With this strategy in place, we turn to the

\begin{proof}[Proof of Lemma~\ref{lem:local}]
We start by taking $\eps > 0$ as provided by Theorem~\ref{thm:main2} but for some steps we possibly need to impose additional smallness conditions that depend upon $n,\alpha,\beta$ through fractional Sobolev embeddings. As usual, we write $s+s' = 2 \alpha$ and $1/p + 1/p' = 1$.

The claim is $\chi u \in \W^{s,p}(\R^n)$ and according to Proposition~\ref{prop:invert} we only need to make sure that the right-hand side in \eqref{eq:localized equation} belongs to $\W^{s',p'}(\R^n)^*$. But from the proof of Theorem~\ref{thm:main2} we know that this is the case for $\chi u \in \W^{\alpha,2}(\R^n)$ and that the conditions on $\chi f$ and $\chi g$ are designed to make it work for the last two terms. 

%We also recall from the proof of Theorem~\ref{thm:main2} that $u \in \W^{\alpha,2}(\R^n) \subset \L^p(\R^n)$ \emph{a priori} by fractional Sobolev embeddings. 

We are left with the error terms. We start with $\cR_{\alpha,A,\chi}$, which as we recall is given for $\phi \in \C_0^\infty(\R^n)$ by
\begin{align*}
\begin{split}
 \langle  \cR_{\alpha,A,\chi} u,\phi \rangle
&:= - \iint_{\R^n \times \R^n}  A(x,y) \frac{(u(x)-u(y)) \cdot (\chi(x) - \chi(y))}{|x-y|^{n+2\alpha}} \cl{\phi(y)} \, \d x \, \d y \\
& \qquad   +  \iint_{\R^n \times \R^n} A(x,y) u(y) \frac{ (\chi(x) - \chi(y)) \cdot \cl{(\phi(x) - \phi(y))}   }{|x-y|^{n+2\alpha}}   \, \d x \, \d y \\
&:= I + II.
\end{split}
\end{align*}
Now,
\begin{align}
\label{eq1:local}
 \int_{\R^n} \frac{|\chi(x) - \chi(y)|^{p}}{|x-y|^{n+sp}} \, \d x \leq \int_{|x-y| \geq 1} \frac{ 2^{p} \|\chi\|_\infty^{p}}{|x-y|^{n + sp}} \, \d x + \int_{|x-y| < 1} \frac{\|\nabla \chi\|_\infty^{p}}{|x-y|^{n + (s-1)p}} \, \d x \lesssim 1
\end{align}
uniformly in $y \in \R^n$ since $s < 1$. Thus, applying H\"older's inequality first in $x$ and then in $y$, we obtain
\begin{align*}
 |II| 
 \leq \lambda^{-1} \int_{\R^n} |u(y)| \bigg( \int_{\R^n} \frac{|\chi(x) - \chi(y)|^{p}}{|x-y|^{n+sp}} \, \d x \bigg)^{1/p} \bigg(\int_{\R^n} \frac{|\phi(x) - \phi(y)|^{p'}}{|x-y|^{n+s'p'}} \, \d x \bigg)^{1/p'} \, \d y
 \lesssim \|u\|_p \semi{\phi}_{s',p'}.
\end{align*}
Similarly, but reversing the roles of $\phi$ and $u$, we get
\begin{align*}
 |I| 
 \leq \lambda^{-1} \int_{\R^n} |\phi(y)| \bigg( \int_{\R^n} \frac{|\chi(x) - \chi(y)|^{2}}{|x-y|^{n+ 2\alpha}} \, \d x \bigg)^{1/2} \bigg(\int_{\R^n} \frac{|u(x) - u(y)|^{2}}{|x-y|^{n+2\alpha}} \, \d x \bigg)^{1/2} \, \d y
 \lesssim \semi{u}_{\alpha,2} \| \phi \|_{2}.
\end{align*}
By making $\eps>0$ smaller, we can assume $1/2 - \alpha/n \leq 1/p$ and $1/{p'} - s'/n \leq 1/2$, which pays for continuous inclusion $\W^{\alpha,2}(\R^n) \subset \L^p(\R^n)$ and $\W^{s',p'}(\R^n) \subset \L^2(\R^n)$, see Lemma~\ref{lem:Fractional Sobolev}. Thus,
\begin{align*}
 |\langle \cR_{\alpha,A,\chi} u,\phi \rangle | \lesssim \|u\|_{\alpha,2} \|\phi\|_{s',p'} \qquad (\phi \in \C_0^\infty(\R^n))
\end{align*}
and by density $\cR_{\alpha,A,\chi} u$ extends to a functional on $\W^{s',p'}(\R^n)$ as required.

It remains to estimate $\cR_{\beta,B,\chi} g$. In case $0 \leq 2 \beta - \alpha < 1$ and $g \in \W^{2\beta - 2 \alpha + s,p}(\R^{n})$, we can repeat the argument for bounding $I$ and $II$ by replacing $u$ by $g$ and changing the indices of integrability and smoothness in H\"older's inequality accordingly. In this manner,
\begin{align*}
 |\langle \cR_{\beta,B,\chi} g,\phi \rangle | 
 \lesssim \|g\|_p \semi{\phi}_{s',p'} + \semi{g}_{2\beta-2\alpha+s,p}\|\phi\|_{p'}
 \lesssim \|g\|_{2\beta-2\alpha+s,p} \|\phi\|_{s',p'} \qquad (\phi \in \C_0^\infty(\R^n)).
\end{align*}

In the complementary case $2\beta < \alpha$, there is no smoothness of $g$ to be taken advantage of. This, however, can be compensated by the fact $\beta < \alpha /2 < 1/2$. More precisely, we put $\tilde{B}(x,y):= B(x,y) + B(y,x)$ and use the first part of the factorization \eqref{eq:factorization} to write the error term differently as
\begin{align*}
 \langle  \cR_{\beta,B,\chi} g,\phi \rangle
&= \iint_{\R^n \times \R^n} \tilde B(x,y) g(x) \frac{\chi(x) - \chi(y)}{|x-y|^{n+2\beta}} \cl{\phi(y)}   \, \d x \, \d y \\
& := \iint_{\R^n \times \R^n} \tilde B(x,y) g(y) \frac{(\chi(x) - \chi(y) ) \cdot\cl{(\phi(y) - \phi(x))}}{|x-y|^{n+2\beta}}   \, \d x \, \d y \\
& \qquad - \iint_{\R^n \times \R^n} \tilde B(x,y) g(y) \frac{\chi(x) - \chi(y) }{|x-y|^{n+2\beta}} \cl{\phi(y)}   \, \d x \, \d y \\
&:= III + IV,
\end{align*}
where we changed $x$ and $y$ in the second step. Now, our assumption is $g \in \L^t(\R^n)$ with $1/p \leq 1/t < 1/p + s'/n$. We let $1/t + 1/t' = 1$ and obtain from Lemmas~\ref{lem:Fractional Sobolev} and \ref{lem:Fractional Sobolev 2} that the condition on $t$ is precisely to guarantee the continuous inclusions $\W^{s',p'}(\R^n) \subset \W^{\delta,t'}(\R^n) \subset \L^{t'}(\R^n)$ for at least some small $\delta \in (0,1)$. This being said, we use H\"older's inequality and \eqref{eq1:local} with $(s,p)$ replaced by $(2\beta - \delta,t)$ to give
\begin{align*}
|III| 
 \leq \frac{2}{\lambda}\int_{\R^n} |g(y)| \bigg( \int_{\R^n} \frac{|\chi(x) - \chi(y)|^{t}}{|x-y|^{n + (2 \beta - \delta) t}} \, \d x \bigg)^{1/t} \bigg(\int_{\R^n} \frac{|\phi(x) - \phi(y)|^{t'}}{|x-y|^{n+\delta t'}} \, \d x \bigg)^{1/t'} \, \d y
 \lesssim \|g\|_t \|\phi\|_{s',p'}.
\end{align*}
Likewise, for the term $IV$, we use the bound \eqref{eq1:local} with $(s,p)$ replaced by $(2\beta, 1)$ to conclude that 
\begin{align*}
 |IV| 
\lesssim \int_{\R^n}  | g(y)| |\phi(y)| \, \d y \leq \| g \|_{t} \| \phi \|_{t'} \lesssim \|g\|_t \|\phi\|_{s',p'}. &\qedhere
\end{align*}
\end{proof}

\begin{rem}
\label{remark:localmod}
As we mentioned after stating Corollary \ref{thm:local2}, the assumption $g \in \L^{2_{*, \alpha - 2\beta}}(B) \cap \L^{t}(\mathbb{R}^{n})$ for $2 \beta < \alpha$ can be replaced by one \textit{global} assumption $g \in \L^{2_{*, \alpha - 2\beta} + \delta}(\mathbb{R}^{n})$ with $\delta > 0$ in accordance with the result in \cite{KMS}. This follows from a simple modification of the argument above to give the required adaptation of Lemma \ref{lem:local}. We sketch the main idea but leave the precise extensions to the interested reader. The difference arises from the term $\Lop_{\beta,B} g$ so it suffices to see that $ \chi \Lop_{\beta,B} g$ and $\chi f$ belong to the same $\W^{s',p'}(\mathbb{R}^{n})^*$ so that one can apply Proposition \ref{prop:invert}.

If $u$ is a weak solution to \eqref{eq:nonloc}, then automatically
\[ \chi \Lop_{\beta, B} g  \in \W^{\alpha,2}(\mathbb{R}^{n})^{*} \]
by the assumption on $f$, the mapping properties of $\Lop_{\alpha,A}$ and the error term considerations for $\cR_{\alpha,A,\chi} u$. By Corollary \ref{cor:Embdedding Lb small}, 
\[  \chi \Lop_{\beta, B} g \in \W^{\sigma',\tau'}(\mathbb{R}^{n})^{*} \] 
provided that $\frac{1}{q} = \frac{1}{\tau} + \frac{\sigma'-2\beta}{n}$. One can check that there is an admissible choice of $\sigma' < \alpha$ and $\tau' < 2$ when $q = 2_{*, \alpha - 2\beta} + \delta$. By interpolation, we find a line segment $\ell$ connecting $(\sigma',1/\tau')$ to $(\alpha,1/2)$ so that $ \chi \Lop_{\beta, B} g \in \W^{s', p'} (\mathbb{R}^{n}) ^{*}$ for all $(s', 1 / p') \in \ell$. Finally, since $ \chi f \in \L^{t}(\mathbb{R}^{n})$ for all $t \in [1, 2_{*,\alpha} + \delta]$ with $\delta > 0$, there is at least one such $t$ for which we can find $(s',1/p') \in \ell$ with $1/t = 1/p + (2\alpha - s)/n$ so that Lemma \ref{lem:Fractional Sobolev} implies $f \in \W^{s',p'}(\mathbb{R}^{n})^{*}$ with $(s', 1/p')$ as close to $(\alpha, 1/2)$ as desired.  
\end{rem}

%%%%%%%%%%%%%%%%%%%%%%%%%%%%%%%%%%%%%%%%%%%%%%%%%%%%%%%%%%%%%%%%%%%%%%%%%%%%%%%%%%%%%%%%%%%%%%%%%%%%%%%%%%%%%%%%%%%%%%%%%%%%%%%%%%%%%%%%%%%%%%%%%%%%%%%%%%%%%%%%%%%%%%%%
\section{An application to fractional parabolic equations}
\label{sec:parabolic}

We demonstrate the flexibility of our approach by a new application to fractional parabolic equations. We shall only treat a particularly interesting special case with connection to non-autonomous maximal regularity, leaving open the establishment of a suitable (full) parabolic analog of Theorem~\ref{thm:main2} and its local version, Theorem~\ref{thm:local}.

We are going to consider the Cauchy problem
\begin{align}
\label{eq:Cauchy}
 \partial_t u(t) + \Lop_{\alpha,A(t)} u(t) &= f(t), \qquad u(0)= 0,
\end{align}
where $f \in \L^2(0,T; \L^2(\R^n))$, $\alpha \in (0,1)$, and for each $t \in [0,T]$ we let $\Lop_{\alpha,A(t)}: \W^{\alpha,2}(\R^n) \to \W^{\alpha,2}(\R^n)^*$ be a fractional elliptic operator as in Section~\ref{sec:Dirichlet form} satisfying the ellipticity condition \eqref{eq:ellip} uniformly in $t$. We recall that the associated sesquilinear forms $\cE_{\alpha,A(t)}$ were defined in \eqref{eq:Lopa}. As for the coefficients 
\begin{align*}
 A(t,x,y):= A(t)(x,y)
\end{align*}
we assume no regularity besides joint measurability in all variables.

Note that we formulated our parabolic problem on $[0,T) \times \R^n$ from the point of view of evolution equations using for, $X$, a Banach space, the space $\L^2(0,T; X)$ of $X$-valued square integrable functions on $(0,T)$ and the associated Sobolev space $\H^1(0,T; X)$ of all $u \in \L^2(0,T;X)$ with distributional derivative  $\partial_t u \in \L^2(0,T; X)$.

\begin{defn}
\label{def:parasol}
Let $f \in \L^2(0,T; \L^2(\R^n))$. A function $u \in \H^1(0,T; \W^{\alpha,2}(\R^n)^*) \cap \L^2(0,T; \W^{\alpha,2}(\R^n))$ is called weak solution to \eqref{eq:Cauchy} if $u(0) = 0$ and
\begin{align}
\label{eq:parasol}
\int_0^T - \langle u, \partial_t \phi \rangle_2 + \cE_{\alpha,A(t)}(u,\phi) \, \d t = \int_0^T \langle f, \phi \rangle_2 \, \d t \qquad (\phi \in \C_0^\infty((0,T) \times \R^n)),
\end{align}
where $\langle \blank, \cdot \rangle_2$ denotes the inner product on $\L^2(\R^n)$.
\end{defn}

\begin{rem}
\label{rem:parasol}
\begin{enumerate}
 \item Since $\W^{\alpha,2}(\R^n)$ is a Hilbert space, the solution space for $u$ above embeds into the continuous functions $\C([0,T]; \L^2(\R^n))$ and hence the requirement $u(0) = 0$ makes sense \cite[Prop.~III.1.2]{Showalter}.
 \item By smooth truncation and convolution $\C_0^\infty((0,T) \times \R^n)$ is dense in $\L^2(0,T; \W^{\alpha,2}(\R^n))$. Thus, the integrated equation \eqref{eq:parasol} precisely means that $u$ satisfies the parabolic equation in \eqref{eq:Cauchy} almost everywhere on $(0,T)$ as an equality in $\W^{\alpha,2}(\R^n)^*$, which contains $\L^2(\R^n)$.
\end{enumerate}
\end{rem}

By a famous result of Lions, the Cauchy problem \eqref{eq:Cauchy} has a unique weak solution $u$ for every $f\in \L^2(0,T; \L^2(\R^n))$. See \cite[p.~513]{Dautray-Lions} and \cite[Thm.~6.1]{Dier-Zacher} for the case of function spaces over the complex numbers. The following self-improvement property is the main result of this section.

\begin{thm}
\label{thm:Cauchy}
Let $f \in \L^2(0,T; \L^2(\R^n))$. Then there exists $\eps > 0$ such that the unique weak solution to \eqref{eq:Cauchy} satisfies 
\begin{align*}
 u \in \H^1(0,T; \W^{\alpha-\eps,2}(\R^n)^*) \cap \L^2(0,T; \W^{\alpha+\eps,2}(\R^n)).
\end{align*}
Moreover, for some $s>\alpha$ and $p>2$ there holds $u \in \W^{\frac{s}{2\alpha},p}(0,T; \L^p(\R^n)) \cap \L^p(0,T; \W^{s,p}(\R^n))$, that is,
\begin{align}
\label{eq:Cauchy Holder estimate}
\begin{split}
&\bigg(\int_0^T \int_{\R^n} |u(t,x)|^p \, \d x \, \d t \bigg)^{1/p}+ 
\bigg(\int_{\R^n} \int_0^T \int_0^T \frac{|u(t,x)-u(s,x)|^p}{|t-s|^{1+sp/(2\alpha)}} \, \d s \, \d t \, \d x \bigg)^{1/p} \\
& \quad + \bigg(\int_0^T \iint_{\R^n \times \R^n} \frac{|u(t,x) - u(t,y)|^p}{|x-y|^{n+sp}} \, \d x \, \d y \, \d t \bigg)^{1/p}
\lesssim \e^T \bigg(\int_0^T \int_{\R^n} |f(t,x)|^2 \, \d x \, \d t \bigg)^{1/2}.
\end{split}
\end{align}
The values of $\eps, s, p$ and the implicit constant in \eqref{eq:Cauchy Holder estimate} depend only on $\lambda,n, \alpha$.
\end{thm}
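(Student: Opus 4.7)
The plan is to adapt the Hilbert-Lax-Milgram-plus-\v{S}ne{\u{\ii}}berg strategy of Section~\ref{sec:Dirichlet form} to the parabolic setting, with Lions' theorem playing the role of Lemma~\ref{lem:Lax-Milgram} at the $\L^2$-centre. Lions' theorem (see e.g.\ \cite[Thm.~6.1]{Dier-Zacher}) yields a bounded isomorphism $\partial_t + \Lop_{\alpha,A(\cdot)} : \mathbb{E}_0 \to \L^2(0,T; \W^{\alpha,2}(\R^n)^*)$, where
\begin{align*}
\mathbb{E}_0 := \big\{ v \in \H^1(0,T; \W^{\alpha,2}(\R^n)^*) \cap \L^2(0,T; \W^{\alpha,2}(\R^n)) : v(0)=0\big\},
\end{align*}
with operator norms depending only on $\lambda,n,\alpha$, up to an overall factor $\e^T$ arising from the standard substitution $v=\e^{-t}u$ which compensates the defect of coercivity in~\eqref{eq:lower bound sesqui}. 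Since $\L^2(0,T;\L^2(\R^n)) \hookrightarrow \L^2(0,T;\W^{\alpha,2}(\R^n)^*)$, this already yields the base regularity $u \in \mathbb{E}_0$ with the desired control.

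For the first claim I would apply \v{S}ne{\u{\ii}}berg's lemma to the one-parameter Hilbertian scale
\begin{align*}
X_\theta := \big\{u \in \H^1(0,T;\W^{\alpha-\theta,2}(\R^n)^*) \cap \L^2(0,T;\W^{\alpha+\theta,2}(\R^n)) : u(0) = 0\big\}, \quad Y_\theta := \L^2(0,T;\W^{\alpha-\theta,2}(\R^n)^*),
\end{align*}
for $\theta$ in a small neighbourhood of $0$. Boundedness of $\partial_t + \Lop_{\alpha,A(\cdot)}:X_\theta \to Y_\theta$, uniform in $A$ with constants depending only on $\lambda,n,\alpha$, is immediate: the exponent-splitting $n+2\alpha = (n/2 + \alpha+\theta) + (n/2 + \alpha-\theta)$ as in Lemma~\ref{lem:1+L bounded} handles the elliptic part pointwise in $t$, while $\partial_t$ trivially sends $\H^1(0,T; Z)$ into $\L^2(0,T;Z)$ for any Hilbert space $Z$. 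The scale $\{Y_\theta\}$ interpolates via vector-valued Calder\'on theory combined with~\eqref{eq:interpolSob}, and the intersection scale $\{X_\theta\}$ interpolates by a Wolff-type theorem once one observes that both component scales share the common pivot $\L^2((0,T)\times\R^n)$ and that the closed condition $u(0)=0$ is preserved under complex interpolation as the kernel of the continuous trace map at $t=0$. Since Lions provides invertibility at $\theta=0$, \v{S}ne{\u{\ii}}berg then furnishes $\eps>0$, depending only on $\lambda, n, \alpha$, with invertibility for $|\theta|<\eps$, which is the first assertion.

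For the second claim I would augment the scale by an integrability parameter $p$, considering
\begin{align*}
\mathbb{E}_{s,p} &:= \big\{u \in \W^{s/(2\alpha),p}(0,T;\L^p(\R^n)) \cap \L^p(0,T;\W^{s,p}(\R^n)) : u(0)=0\big\},\\
\mathbb{F}_{s,p} &:= \W^{s/(2\alpha)-1,p}(0,T;\L^p(\R^n)) + \L^p(0,T;\W^{s',p'}(\R^n)^*),
\end{align*}
with parabolic scaling $s/(2\alpha)$ built in, and $s+s'=2\alpha$, $1/p+1/p'=1$. At $(s,p)=(\alpha,2)$ both summands of $\mathbb{F}_{\alpha,2}$ coincide with $\L^2(0,T;\W^{\alpha,2}(\R^n)^*)$, recovering the Lions centre. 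Boundedness of $\partial_t+\Lop_{\alpha,A(\cdot)}:\mathbb{E}_{s,p}\to \mathbb{F}_{s,p}$ follows from Lemma~\ref{lem:1+L bounded} integrated in $t$ for the elliptic part, and from the trivial mapping $\partial_t:\W^{\sigma,p}(0,T;Z)\to \W^{\sigma-1,p}(0,T;Z)$ for the time-derivative part. \v{S}ne{\u{\ii}}berg then extends invertibility to a neighbourhood of $(\alpha,1/2)$ in the $(s,1/p)$-plane, and choosing $(s,p)$ there with $s>\alpha$, $p>2$ gives the second assertion: \eqref{eq:Cauchy Holder estimate} is the explicit unfolding of the $\mathbb{E}_{s,p}$-seminorm, while the right-hand side $\|f\|_{\L^2(\L^2)}$ arises from the continuous inclusion $\L^2(0,T;\L^2(\R^n))\hookrightarrow \mathbb{F}_{s,p}$ for such $(s,p)$ (using Lemma~\ref{lem:Fractional Sobolev} for the spatial direction and H\"older in time thanks to the finite interval $(0,T)$). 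The main obstacle is proving that $\{\mathbb{E}_{s,p}\}$ and $\{\mathbb{F}_{s,p}\}$ form complex interpolation scales: the intersection/sum structure combined with the vanishing-trace condition at $t=0$ is delicate, but can be handled by identifying these spaces with anisotropic Triebel-Lizorkin spaces on the parabolic half-strip $(0,T)\times\R^n$ with vanishing initial trace, for which the complex interpolation identities and trace theorems are classical.
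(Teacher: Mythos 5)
Your high-level plan (Lax-Milgram/Lions at the centre, then \v{S}ne{\u{\ii}}berg extrapolation) matches the paper's strategy, and several individual observations are sound, such as the exponent-splitting bound for the elliptic part and the exponential substitution to handle the defect of coercivity. However, the proposal has a genuine gap at the crucial step: you never establish that the scales you want to apply \v{S}ne{\u{\ii}}berg to are in fact complex interpolation scales, and your suggestions for doing so do not work.

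First, your scale $X_\theta$ (and likewise $\mathbb{E}_{s,p}$, $\mathbb{F}_{s,p}$) is an intersection (resp.\ sum) of two vector-valued Sobolev scales with a vanishing-trace constraint $u(0)=0$ on a bounded interval. Complex interpolation does \emph{not} in general commute with intersections, with sums, or with passing to kernels of bounded maps such as the trace at $t=0$; the Wolff theorem you invoke concerns gluing two overlapping interpolation scales, not interpolating intersections, and ``anisotropic Triebel--Lizorkin on the parabolic half-strip with vanishing initial trace'' is precisely where the classical theory is \emph{not} available without substantial retraction/coretraction machinery. So the central hypothesis of \v{S}ne{\u{\ii}}berg's lemma is left unverified. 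The paper sidesteps all of these obstructions by (i) extending the coefficients to all of $\R$ in time (setting $A\equiv 1$ outside $[0,T]$), so there is no boundary and no trace condition to interpolate; (ii) replacing the full time derivative by the half-order derivative $D_t^{1/2}$ via hidden coercivity (Dier--Zacher), so the energy space is $\H^{1/2}(\R;\L^2)\cap\L^2(\R;\W^{\alpha,2})$, not an $\H^1$-intersection; and (iii) identifying $\IW^{s,p}_\alpha(\R^{1+n})$ with an anisotropic Besov space $\B^{\gamma,\bv}_{p,p}(\R^{1+n})$ (Dachkovski), for which the complex interpolation identity follows by a retract argument as for ordinary Besov spaces. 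Without (iii) there is simply no interpolation identity to feed into \v{S}ne{\u{\ii}}berg.

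Second, even granting the interpolation, your route to the first claim $u\in\H^1(0,T;\W^{\alpha-\eps,2}(\R^n)^*)$ is misconceived: you try to extrapolate an $\H^1\cap\L^2$-type scale directly. The paper never interpolates such a scale. Instead it first obtains $u\in\L^2(0,T;\W^{\alpha+\eps,2}(\R^n))$ from the $\IW^{s,2}_\alpha$-regularity (half-order time derivative only), and then recovers $\partial_t u\in\L^2(0,T;\W^{\alpha-\eps,2}(\R^n)^*)$ directly from the equation, using that $\Lop_{\alpha,A(t)}:\W^{\alpha+\eps,2}\to\W^{\alpha-\eps,2}{}^*$ is bounded uniformly in $t$. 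This decoupling is what makes the argument rigorous, and your proposal misses it.
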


\begin{rem}
\begin{enumerate}
 \item Since $sp > 2\alpha$, the boundedness of the second integral in \eqref{eq:Cauchy Holder estimate} entails, in particular, $u \in \C^\gamma([0,T]; \L^p(\R^n))$ with H\"older exponent $\gamma = \frac{sp}{2\alpha} - 1$, see fore example \cite[Cor.~26]{Simon}.
 \item The largest possible value $\eps = \alpha$ with $\W^{0,2}(\R^n):=\L^2(\R^n)$ would mean \emph{maximal regularity} because all three functions in the parabolic equation were in the same space $\L^2(0,T; \L^2(\R^n))$. See \cite{MaxRegSurvey} for further background and (counter-)examples.
\end{enumerate}
\end{rem}

For the proof, we shall apply the same scheme as in the stationary case, see Sections~\ref{sec:Dirichlet form} and \ref{sec:higher diff}.

\subsection{Definition of the parabolic Dirichlet form}

One of the immediate challenges in moving from the elliptic operator to the parabolic operator is the lack of coercivity of the operator $\partial_t + \Lop_{\alpha,A(t)}$. However, we can rely on the \emph{hidden coercivity} introduced in this context in \cite{Dier-Zacher}. This requires us to study the fractional parabolic equation for $t \in \R$ first, that is,
\begin{align*}
 \partial_t u(t) + \Lop_{\alpha,A(t)} u(t) &= f(t),
\end{align*}
where weak solutions are in the sense of Definition~\ref{def:parasol}, but by replacing $(0,T)$ with $\R$ and of course removing the initial condition. Note that we can simply extend the coefficients by $A(t,x,y) :=1$ if $t \notin [0,T]$ since we are not assuming any regularity. 

For simplicity, put $H:= \L^2(\R^n)$ and $V:= \W^{\alpha,2}(\R^n)$. Let $\cF$ be the Fourier transform in $t$ on the vector-valued space $\L^2(\R; H)$ and define the \emph{half-order time derivative} $\dhalf$ and the \emph{Hilbert transform} $\HT$ through the Fourier symbols $|\tau|^{1/2}$ and $- \i \sgn(\tau)$, respectively. They are crafted to factorize $\partial_t = \dhalf \HT \dhalf$. Next, we write $\H^{1/2}(\R; H)$ for the Hilbert space of all $u \in \L^2(\R;H)$ such that $\dhalf u \in \L^2(\R; H)$ and define the parabolic \emph{energy space}
\begin{align*}
 \IE:= \H^{1/2}(\R; H) \cap \L^2(\R; V)
\end{align*}
equipped with the Hilbertian norm $\|u\|_\IE := \big(\|u\|_{\L^2(\R; V)}^2 + \|\dhalf u\|_{\L^2(\R; H)}^2\big)^{1/2}$. It allows one to define $1+ \partial_t + \Lop_{\alpha,A(t)}$ as a bounded operator $\IE \to \IE^*$ via
\begin{align}
\label{sesqui para}
 \langle (1+ \partial_t + \Lop_{\alpha,A(t)})u ,v \rangle := \int_{\R} \langle u, v \rangle_2 + \langle \HT \dhalf u, \dhalf v \rangle_2 + \cE_{\alpha,A(t)}(u,v) \, \d t,
\end{align}
where $\langle \cdot \,, \cdot \rangle_2$ denotes the inner product on $H = \L^2(\R^n)$. We state our substitute for Lemma~\ref{lem:Lax-Milgram} in the parabolic case. It is an extension of Theorem~3.1 in \cite{Dier-Zacher}.

\begin{lem}
\label{lem:Lax-Milgram para}
The operator $1 + \partial_t + \Lop_{\alpha,A(t)}: \IE \to \IE^*$ is bounded and invertible. Its norm and the norm of its inverse can be bounded only in terms of $\lambda$. Moreover, given $f \in \L^2(\R; H)$, $u:= (1 + \partial_t + \Lop_{\alpha,A(t)})^{-1} f$ is a weak solution to $\partial_t u + \Lop_{\alpha,A(t)}u = f - u$ on $\R^{1+n}$.
\end{lem}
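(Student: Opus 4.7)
\textbf{Boundedness.} This is immediate from \eqref{sesqui para} and Cauchy--Schwarz. The first term is controlled by $\|u\|_{\L^2(\R;H)}\|v\|_{\L^2(\R;H)}$; the second by $\|\dhalf u\|_{\L^2(\R;H)}\|\dhalf v\|_{\L^2(\R;H)}$, because $\HT$ is a Fourier multiplier with unit modulus symbol and thus an $\L^2(\R;H)$-isometry; and the third by $\lambda^{-1}\|u\|_{\L^2(\R;V)}\|v\|_{\L^2(\R;V)}$ by applying the elliptic bound from Lemma~\ref{lem:Lax-Milgram} pointwise in $t$ and integrating with Cauchy--Schwarz. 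The norm is thus controlled purely by $\lambda$.

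\textbf{Invertibility via hidden coercivity.} The form $a(u,v):=\langle(1+\partial_t+\Lop_{\alpha,A(t)})u,v\rangle$ is \emph{not} coercive on $\IE$: the skew-adjointness of $\HT$ on $\L^2(\R;H)$ forces $\Re\langle\HT\dhalf u,\dhalf u\rangle_{\L^2(\R;H)}=0$, so testing against $u$ only yields control of $\|u\|_{\L^2(\R;V)}^2$ and nothing on $\|\dhalf u\|_{\L^2(\R;H)}^2$. I would recover the missing half-derivative norm by the Dier--Zacher trick \cite{Dier-Zacher}: test against $v=u+\mu\HT u$ for a small $\mu>0$ depending only on $\lambda$. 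Because $\HT$ is a Fourier multiplier of modulus one, it is an isometry on $\L^2(\R;H)$, on $\L^2(\R;V)$ (shown by viewing the Gagliardo seminorm fiberwise and applying Plancherel in $t$) and on $\H^{1/2}(\R;H)$ (since it commutes with $\dhalf$). Hence $I+\mu\HT$ is invertible on $\IE$ for $\mu<1$ by Neumann series.

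Separately, I would estimate $\Re a(u,u)\gtrsim_\lambda\|u\|_{\L^2(\R;V)}^2$ from $\|u\|_{\L^2(\R;H)}^2$ and pointwise use of \eqref{eq:lower bound sesqui}, while for $\Re a(u,\HT u)$ the calculation using Plancherel and commutativity of $\HT$ with $\dhalf$ gives $\langle\HT\dhalf u,\dhalf\HT u\rangle_{\L^2(\R;H)}=\|\dhalf u\|_{\L^2(\R;H)}^2$, $\langle u,\HT u\rangle_{\L^2(\R;H)}$ is purely imaginary, and the elliptic cross term is bounded by $\lambda^{-1}\|u\|_{\L^2(\R;V)}^2$ using again the $\L^2(\R;V)$-isometry property of $\HT$. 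Combining yields
\begin{align*}
\Re a(u,u+\mu\HT u)\geq(\lambda-\mu\lambda^{-1})\|u\|_{\L^2(\R;V)}^2+\mu\|\dhalf u\|_{\L^2(\R;H)}^2,
\end{align*}
and choosing $\mu:=\lambda^2/2$ (and truncating to $\mu<1$) produces $\Re a(u,u+\mu\HT u)\gtrsim_\lambda\|u\|_\IE^2$. Since $\|u+\mu\HT u\|_\IE\leq2\|u\|_\IE$, the modified form $\tilde a(u,v):=a(u,(I+\mu\HT)v)$ is bounded and coercive on $\IE\times\IE$, so Lax--Milgram applies to $\tilde a$ and produces an inverse. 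Composing with the inverse of $I+\mu\HT$ on $\IE$ gives bounded invertibility of $1+\partial_t+\Lop_{\alpha,A(t)}:\IE\to\IE^*$ with quantitative constants depending only on $\lambda$.

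\textbf{Weak solution property.} Given $f\in\L^2(\R;H)\hookrightarrow\IE^*$, let $u:=(1+\partial_t+\Lop_{\alpha,A(t)})^{-1}f\in\IE$. Testing against any $\phi\in\C_0^\infty(\R\times\R^n)\subset\IE$ and invoking the factorization $\partial_t=\dhalf\HT\dhalf$ together with self-adjointness of $\dhalf$ on $\L^2(\R;H)$ gives $\int_\R\langle\HT\dhalf u,\dhalf\phi\rangle_2\,\d t=\int_\R\langle u,-\partial_t\phi\rangle_2\,\d t$ as the distributional incarnation of $\langle\partial_t u,\phi\rangle$. Rearranging the defining identity for $u$ then yields exactly the parabolic analogue of Definition~\ref{def:parasol} (with $\R$ replacing $(0,T)$ and with right-hand side $f-u$), which is what is claimed.

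\textbf{Main obstacle.} The only genuinely nontrivial step is establishing coercivity on the energy space, which is invisible from \eqref{sesqui para} and requires the test-function perturbation $v\mapsto v+\mu\HT v$; once this hidden coercivity is in place, both invertibility and the interpretation as a weak PDE follow by standard functional-analytic arguments.
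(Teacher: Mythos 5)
Your proof is correct and follows essentially the same route as the paper: boundedness by Cauchy--Schwarz, hidden coercivity via the Dier--Zacher test function $v+\delta\HT v$ with $\delta=\lambda^2/2$, Lax--Milgram for the perturbed form, and undoing the factorization $\partial_t=\dhalf\HT\dhalf$ to recover the weak formulation. The only cosmetic difference is that you invert $1+\delta\HT$ on $\IE$ by a Neumann series, whereas the paper notes $(1+\delta^2)^{-1/2}(1+\delta\HT)$ is an isometry of $\IE$; both are immediate from the modulus-one symbol of $\HT$.
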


\begin{proof}
The $\IE \to \IE^*$ boundedness of $1 + \partial_t + \Lop_{\alpha,A}$ is clear by definition. Next, for the invertibility, the form 
\begin{align*}
 a_\delta(u,v) := \int_{\R} \langle u, (1+\delta\HT) v \rangle_2 + \langle \HT \dhalf u, \dhalf (1+\delta\HT)v \rangle_2 + \cE_{\alpha,A(t)}(u,(1+\delta\HT)v) \, \d t
\end{align*}
for $u,v\in \IE$, is bounded and satisfies an accretivity bound for $\delta>0$ sufficiently small, for example $\delta := \lambda^2/2$. Indeed, from boundedness and ellipticity of $\cE_{\alpha,A(t)}$ uniformly in $t$ (see Section~\ref{sec:Dirichlet form}) and the fact that the Hilbert transform is $\L^2$-isometric and skew-adjoint,
\begin{align*}
\Re a_\delta(u,u) \geq \|u\|_{\L^2(\R; H)}^2 + \delta \|\dhalf u \|_2^2 + (\lambda-\lambda^{-1}\delta ) \int_{\mathbb{R}} \semi{u(t, \cdot)}_{\alpha,2}^{2} \, \d t \geq \frac{\lambda^2}{2} \|u\|_{\IE}^{2}. 
\end{align*}
As  
\begin{align*}
\dual{(1 + \partial_t + \Lop_{\alpha,A(t)}) u}{ (1+\delta\HT)v} = a_\delta(u,v), \qquad (u,v\in \IE), 
\end{align*}
and since $(1+\delta^2)^{-1/2}(1+\delta\HT)$ is isometric on $\IE$ as is seen using its symbol $(1+\delta^2)^{-1/2}(1- \i \delta \sgn\tau )$, it follows from the Lax-Milgram lemma that $1 + \partial_t + \Lop_{\alpha,A(t)}$ is invertible from $\IE$ onto $\IE^*$. Finally, given $f \in \L^2(\R; H) \subset \IE$ we can define $u:= (1 + \partial_t + \Lop_{\alpha,A(t)})^{-1} f$ and have by definition
\begin{align*}
 \int_{\R}  \langle \HT \dhalf u, \dhalf v \rangle_2 + \cE_{\alpha,A(t)}(u,v) \, \d t = \int_{\R} \langle f-u, v \rangle_2 \, \d t \qquad (v \in \IE).
\end{align*}
Since for $v \in \C_0^\infty(\R \times \R^n)$ we can undo the factorization $\langle \HT \dhalf u, \dhalf v \rangle_2 = - \langle u, \partial_t v \rangle$, we see that $u$ is a weak solution to $\partial_t u + \Lop_{\alpha,A(t)}u = f - u$.
\end{proof}

\begin{rem}
Skew-adjointness of the Hilbert transform and ellipticity of each sesquilinear form $\cE_{\alpha,A(t)}$ yield $\Re \langle (\partial_t + \Lop_{\alpha,A(t)})u ,u \rangle \geq 0$ for every $u \in \IE$ and by the previous lemma $1+(\partial_t + \Lop_{\alpha,A(t)}) : \IE \to \IE^*$ is invertible. By definition, this means that $\partial_t + \Lop_{\alpha,A(t)}$ can be defined as a \emph{maximal accretive operator} in $\L^2(\R^{1+n})$ with maximal domain $\mathbb{D}:=\{u \in \IE : (\partial_t + \Lop_{\alpha,A(t)}) u \in \L^2(\R^{1+n})\}$.
\end{rem}

In order to proceed, we need to link the parabolic energy space $\IE$ and the sesquilinear form on the right-hand side of \eqref{sesqui para} with a Dirichlet form on fractional Sobolev spaces as in Section~\ref{sec:Dirichlet form}. To this end, note that for $u, v \in \L^2(\R; H)$ we obtain from Plancherel's theorem applied to the integral in $s$,
\begin{align*}
 \iint_{\R \times \R} \frac{\langle u(s+h)-u(s), v(s+h)-v(s)\rangle_2}{|h|^2} \, \d s \, \d h
&= \iint_{\R \times \R} \frac{|\e^{-\i h \tau} - 1|^{2}}{|h|^2} \langle \cF{u}(\tau), \cF{v}(\tau) \rangle_2 \, \d \tau \, \d h \\
&= 2 \pi \int_{\R} \langle \dhalf u(t), \dhalf v(t) \rangle_2 \, \d t,
\end{align*}
where in the second step we evaluated the well-known integral in $h$ to $2 \pi |\tau|$. This calculation is understood in the sense that for $u = v$ the left-hand side is finite if and only if the right-hand side is defined and finite and if both $u$ and $v$ have this property, then equality above holds true. Consequently, $\partial_t + \Lop_{\alpha,A(t)}$ is the operator associated with the \emph{parabolic Dirichlet form}
\begin{align*}
 \cP_{\alpha, A(t)}(u,v)
&:= \int_{\R} \langle \HT \dhalf u, \dhalf v \rangle_2 + \cE_{\alpha,A(t)}(u,v) \, \d t \\
&= \frac{1}{2 \pi}\int_{\R^n} \iint_{\R \times \R} \frac{(\HT u(t,x)- \HT u(s,x)) \cdot \cl{(v(t,x)-v(s,x))}}{|t-s|^2} \, \d s \, \d t \, \d x \\
&\quad + \int_{\R} \iint_{\R^n \times \R^n} A(t,x,y) \frac{(u(t,x) - u(t,y))\cdot \cl{(v(t,x)-v(t,y))}}{|x-y|^{n+2\alpha}} \, \d x \, \d y \, \d t,
\end{align*}
defined so far for $u,v \in \IE$. Here, $\HT u(\blank,x)$ is understood as the Hilbert transform of $u(\blank,x) \in \L^2(\R)$ for almost every fixed $x \in \R^n$.

\subsection{Analysis of the parabolic Dirichlet form}
\label{sec:Dirichlet para}

The definition of the parabolic Dirichlet form determines the spaces `nearby' $\IE$ to look at: For $s \in (0,1) \cap (0,2 \alpha)$ and $p \in (1,\infty)$ we let $\IW^{s,p}_\alpha(\R^{1+n})$ consist of all functions $u \in \L^p(\R^{1+n})$ with finite semi-norm
\begin{align*}
\Isemi{u}_{s,p}
:= \bigg(\int_{\R^n} \iint_{\R \times \R} \frac{|u(t,x)-u(s,x)|^p}{|t-s|^{1+sp/(2\alpha)}} \, \d s \, \d t \, \d x 
+ \int_{\R} \iint_{\R^n \times \R^n} \frac{|u(t,x) - u(t,y)|^p}{|x-y|^{n+sp}} \, \d x \, \d y \, \d t \bigg)^{1/p}
\end{align*}
and put $\|\cdot\|_{\IW^{s,p}_\alpha(\R^{1+n})} := \|\cdot\|_p + \Isemi{\cdot}_{s,p}$. Again, smooth truncation and convolution yields density of $\C_0^\infty(\R^{1+n})$ in any of these spaces. Often we shall write more suggestively 
\begin{align*}
 \IW^{s,p}_\alpha(\R^{1+n}) = \W^{\frac{s}{2\alpha},p}(\R; \L^p(\R^n)) \cap \L^p(\R; \W^{s,p}(\R^n)),
\end{align*}
where the vector-valued fractional Sobolev spaces are defined as their scalar-valued counterpart upon replacing absolute values by norms. But as $\W^{\frac{s}{2\alpha},p}(\R; \L^p(\R^n)) = \L^p(\R^n; \W^{\frac{s}{2\alpha},p}(\R))$ in virtue of Tonelli's theorem, all fractional Sobolev embeddings stated for the scalar-valued space $\W^{\frac{s}{2\alpha},p}(\R)$ remain valid for $\W^{\frac{s}{2\alpha},p}(\R; \L^p(\R^n))$. Note the scaling in the spaces $\IW^{s,p}_\alpha(\R^{1+n})$ adapted to the fractional parabolic equation: one time derivative accounts for $2 \alpha$ spatial derivatives.

By what we have seen before, $\IW^{\alpha,2}_\alpha(\R^{1+n}) = \IE$ up to equivalent norms and hence $1+\partial_t + \Lop_{\alpha,A(t)}$ is invertible from that space onto its anti-dual by Lemma~\ref{lem:Lax-Milgram para}. The following mapping properties are then proved by H\"older's inequality exactly as their elliptic counterpart, Lemma~\ref{lem:1+L bounded}, on making the additional observation that $\HT: \W^{s,p}(\R) \to \W^{s,p}(\R)$ is bounded. Indeed, this is immediate from the equivalent norm \eqref{BesovNorm} on $\W^{s,p}(\R)$ since the Hilbert transform commutes with convolutions and is bounded on $\L^p(\R)$.

\begin{lem}
\label{lem:1+L bounded para}
Let $s,s' \in (0,1)$ and $p,p' \in (1,\infty)$ satisfy $s + s' = 2 \alpha$ and $1/p + 1/p' = 1$. Then $1 + \partial_t + \Lop_{\alpha,A(t)}$ extends from $\C_0^\infty(\R^n)$ by density to a bounded operator $\IW^{s,p}_\alpha(\R^{1+n}) \to \IW^{s',p'}_\alpha(\R^{1+n})^*$. 
\end{lem}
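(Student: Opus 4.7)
The plan is to bound the three pieces of the parabolic sesquilinear form \eqref{sesqui para} separately by H\"older's inequality and then to conclude by density of $\C_0^\infty(\R^{1+n})$ in both $\IW^{s,p}_\alpha(\R^{1+n})$ and $\IW^{s',p'}_\alpha(\R^{1+n})$. The pairing $\int_\R \langle u,v \rangle_2\,\d t$ is dominated by $\|u\|_p \|v\|_{p'}$ by a direct application of H\"older. The spatial piece $\int_\R \cE_{\alpha,A(t)}(u,v)\,\d t$ is treated exactly as in Lemma~\ref{lem:1+L bounded}: splitting $n+2\alpha = (n/p + s) + (n/p' + s')$ and applying H\"older in $(x,y)$ pointwise in $t$ yields $\lambda^{-1}\semi{u(t,\cdot)}_{s,p}\semi{v(t,\cdot)}_{s',p'}$, and a further H\"older in $t$ with exponents $(p,p')$ produces $\lambda^{-1}$ times the spatial contributions to $\Isemi{u}_{s,p}$ and $\Isemi{v}_{s',p'}$.

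The time piece $\int_\R \langle \HT \dhalf u, \dhalf v \rangle_2\,\d t$ is handled analogously. The Plancherel-type identity used to define the parabolic Dirichlet form in Section~\ref{sec:parabolic} allows us to rewrite it, for $u,v \in \C_0^\infty(\R^{1+n})$, as
\begin{align*}
 \frac{1}{2\pi} \int_{\R^n} \iint_{\R \times \R} \frac{(\HT u(t,x) - \HT u(\sigma,x)) \cdot \cl{(v(t,x) - v(\sigma,x))}}{|t-\sigma|^2} \, \d \sigma\, \d t\, \d x.
\end{align*}
Since $1/p + s/(2\alpha) + 1/p' + s'/(2\alpha) = 2$, we may split $|t-\sigma|^{-2}$ as a product of weights with exponents $(1+sp/(2\alpha))/p$ and $(1+s'p'/(2\alpha))/p'$ and apply H\"older first in $(t,\sigma)$ and then in $x$. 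This bounds the term by $\|\HT u\|_{\W^{s/(2\alpha),p}(\R;\L^p(\R^n))}\|v\|_{\W^{s'/(2\alpha),p'}(\R;\L^{p'}(\R^n))}$, and the boundedness of $\HT$ on $\W^{s/(2\alpha),p}(\R)$ noted in the excerpt, lifted to the $\L^p(\R^n)$-valued setting by Tonelli, converts this into the desired norm on $u$.

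Summing the three estimates gives the required boundedness on test functions, with constant depending only on $\lambda$ and $\alpha$, and density finishes the argument. I do not expect any real obstacle; the only point worth a moment's attention is the legitimacy of the Plancherel factorization in the $\L^p$-setting, but this is precisely the calculation carried out at the end of the previous subsection to define $\cP_{\alpha,A(t)}$, and on $\C_0^\infty(\R^{1+n})$ every quantity involved is manifestly finite so the rearrangement is justified term by term.
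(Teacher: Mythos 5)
Your proof is correct and follows exactly the route the paper indicates: the paper disposes of this lemma in one line, saying it is proved ``by H\"older's inequality exactly as their elliptic counterpart, Lemma~\ref{lem:1+L bounded}, on making the additional observation that $\HT: \W^{s,p}(\R) \to \W^{s,p}(\R)$ is bounded,'' and your three-term decomposition, the exponent split $(1+sp/(2\alpha))/p + (1+s'p'/(2\alpha))/p' = 2$ for the time piece, and the use of Tonelli to lift $\HT$-boundedness to the vector-valued setting are exactly the details that remark is summarising.
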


\begin{rem}
The extensions obtained above are also denoted by $1 + \partial_t + \Lop_{\alpha,A}$ and a comment analogous to Remark~\ref{rem:1+L bounded} applies.
\end{rem}

Hence, the only ingredient missing in our recipe for self-improvement is the complex interpolation identity replacing \eqref{eq:interpolSob}. This can be obtained from \cite{Dachkovski} as follows. We define the vector of anisotropy $\bv$ and the mean smoothness $\gamma$ by
\begin{align*}
 \bv:= \big(\tfrac{2 \alpha(1+n)}{n+2\alpha}, \tfrac{1+n}{n+2\alpha}, \ldots,\tfrac{1+n}{n+2\alpha} \big)  \in \R^{1+n}, \qquad  \gamma:= \tfrac{(1+n)}{n+2\alpha} s \in (0,1) \quad \text{for $s \in (0,1) \cap (0,2\alpha)$}.
\end{align*}
Then, \cite[Thm.~6.2]{Dachkovski} identifies $\IW^{s,p}_\alpha(\R^{1+n})$ up to equivalent norms with the \emph{anisotropic Besov space} $\B^{\gamma,\bv}_{p,p}(\R^{1+n})$. In turn, this space is defined in \cite{Dachkovski} exactly as the ordinary Besov space $\B^{\gamma}_{p,p}(\R^{1+n})$ in Section~\ref{sec:Notation}, upon replacing the scalar multiplication $2^j x = (2^j x_0,\ldots, 2^j x_n)$ on $\R^{1+n}$ by the anisotropic multiplication $2^{\bv j} x := (2^{\bv_0 j} x_0,\ldots 2^{\bv_{n} j} x_n)$, where $j \in \R$ and subscripts indicate coordinates of $(n+1)$-vectors, and the Euclidean norm $|x|$ by the anisotropic norm $|x|_{\bv}$ defined as the unique positive number $\sigma$ such that $\sum_{j} x_j^2/\sigma^{2 \bv_j} = 1$. With these modifications, $\B^{\gamma,\bv}_{p,p}(\R^{1+n})$ is the collection of all $ u \in \L^p(\R^{1+n})$ with finite norm
\begin{align*}
 \|u\|_{\B^{\gamma, \bv}_{p,p}(\R^{1+n})} := \bigg(\sum_{j = 0}^\infty 2^{j \gamma p} \|\phi_j \ast u\|_p^p \bigg)^{1/p} < \infty.
\end{align*}
Note that this norm now reads exactly as the one in \eqref{BesovNorm} on the anisotropic space $\B^{\gamma}_{p,p}(\R^{1+n})$ because the anisotropy $\bv$ is only present in the now anisotropic dyadic decomposition $1 = \sum_{j=0}^\infty \cF(\phi_j)(\xi)$. With this particular structure of the norms, complex interpolation works by abstract results exactly as outlined before in Section~\ref{sec:Dirichlet form}, see again \cite[Thm.~6.4.5(6)]{Bergh-Loefstroem} and \cite[Cor.~4.5.2]{Bergh-Loefstroem}. Thus, we have
\begin{align*}
 \big[\IW^{s_0,p_0}_\alpha(\R^n),  \IW^{s_1,p_1}_\alpha(\R^{1+n})\big]_\theta = \IW^{s,p}_{\alpha}(\R^n)
\end{align*}
for $p_0, p_1 \in (1,\infty)$, $s_0,s_1 \in (0,1) \cap (0,2\alpha)$ and the analogous identity for the anti-dual spaces both up to equivalent norms with $p,s$ given as before by $\tfrac{1}{p} = \tfrac{1-\theta}{p_0} + \tfrac{\theta}{p_1}$ and $s = (1-\theta)s_0 + \theta s_1$. We do not insist on uniformity of the equivalence constants as in Section~\ref{sec:Dirichlet form} and leave the care of checking it to interested readers.

This interpolation identity and Lemma~\ref{lem:1+L bounded para} set the stage to apply {\v{S}}ne{\u{\ii}}berg's result as in the proof of Proposition~\ref{prop:invert} to deduce

\begin{prop}
\label{prop:invert para}
Fix any line $\ell$ passing through $(\alpha,1/2)$ in the $(s, 1/p)$-plane. There exists $\eps > 0$ depending on $\ell, \lambda, n$, such for $(s,1/p) \in \ell$ with $|s-\alpha|, |p-2| < \eps$ and $s',p'$ satisfying $s+s' = 2\alpha$ and $1/p + 1/p' = 1$, the operator
\begin{align*}
 1 + \partial_t +  \Lop_{\alpha,A(t)} : \IW^{s,p}_\alpha(\R^{1+n}) \to \IW^{s',p'}_\alpha(\R^{1+n})^*
\end{align*}
is invertible and the inverse agrees with the one obtained for $s=\alpha$, $p=2$ on their common domain of definition.
\end{prop}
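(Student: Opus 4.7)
The plan is to copy the architecture of Proposition~\ref{prop:invert} verbatim, using the parabolic analogs established in Section~\ref{sec:Dirichlet para}. Fix once and for all a line $\ell$ through $(\alpha,1/2)$ in the $(s,1/p)$-plane. The relevant open set of parameters on $\ell$ is the one for which $s\in(0,1)\cap(0,2\alpha)$ (so that the space $\IW^{s,p}_\alpha(\R^{1+n})$ is defined), $p\in(1,\infty)$, and simultaneously the dual parameters $s'=2\alpha-s$ and $1/p'=1-1/p$ satisfy the same range conditions; a small neighborhood of $(\alpha,1/2)$ on $\ell$ clearly lies in this set.

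First I would pick two points $(s_0,1/p_0)$ and $(s_1,1/p_1)$ on $\ell$ lying on opposite sides of $(\alpha,1/2)$ and still in the admissible range. Lemma~\ref{lem:1+L bounded para} provides boundedness
\begin{equation*}
 1+\partial_t+\Lop_{\alpha,A(t)}:\IW^{s_i,p_i}_\alpha(\R^{1+n})\to \IW^{s_i',p_i'}_\alpha(\R^{1+n})^{*}\qquad(i=0,1),
\end{equation*}
while Lemma~\ref{lem:Lax-Milgram para} gives invertibility at the center $(\alpha,1/2)$. The anisotropic Besov identification recalled in Section~\ref{sec:Dirichlet para}, combined with \cite[Thm.~6.4.5(6)]{Bergh-Loefstroem} and \cite[Cor.~4.5.2]{Bergh-Loefstroem}, says that the complex interpolation scale between $\IW^{s_0,p_0}_\alpha(\R^{1+n})$ and $\IW^{s_1,p_1}_\alpha(\R^{1+n})$ retraces the segment of $\ell$ between these two endpoints, and the analogous statement holds for the anti-dual spaces between $\IW^{s_0',p_0'}_\alpha(\R^{1+n})^{*}$ and $\IW^{s_1',p_1'}_\alpha(\R^{1+n})^{*}$.

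Šneĭberg's theorem (as stated in \cite{Sneiberg-Original}, see the quantitative version in \cite[Thm.~1.3.25]{EigeneDiss}) then applies: invertibility at the interior point $(\alpha,1/2)$ of the interpolation segment propagates to an open neighborhood on the segment whose radius is controlled by the operator norms and the norms of the inverse at $(\alpha,1/2)$, together with the interpolation constants along $\ell$. Since Lemmas~\ref{lem:Lax-Milgram para} and~\ref{lem:1+L bounded para} bound these quantities in terms of $\lambda$ and $n$, and the interpolation constants depend only on $\ell$, we obtain the announced $\eps>0$ depending on $\ell,\lambda,n$.

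The only genuinely non-routine step is verifying that one really is applying Šneĭberg's lemma to an interpolation scale in the required sense, i.e.\ checking that the Besov identification from \cite{Dachkovski} gives the complex interpolation identity with norms equivalent \emph{on segments}; this is precisely what Section~\ref{sec:Dirichlet para} prepared. Consistency of the inverses with the one computed at $(\alpha,1/2)$ is then automatic from general complex interpolation theory \cite[Thm.~8.1]{KMM}, exactly as in the elliptic case. Compared with Proposition~\ref{prop:invert}, no attempt is made to choose $\eps$ uniformly in the line $\ell$, in line with the remark at the end of Section~\ref{sec:Dirichlet para} that such uniformity of interpolation constants is not pursued; thus the statement quantifies only over one line at a time, which is all that is needed for the application to Theorem~\ref{thm:Cauchy}.
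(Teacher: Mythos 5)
Your proposal is correct and follows essentially the same route the paper intends: the paper presents Lemma~\ref{lem:1+L bounded para}, Lemma~\ref{lem:Lax-Milgram para}, and the anisotropic interpolation identity as the ingredients and then simply states that \v{S}ne\u{\ii}berg's theorem applies ``as in the proof of Proposition~\ref{prop:invert}'', which is exactly what you spell out. You also correctly identify why $\eps$ is allowed to depend on the line $\ell$ here (the paper explicitly declines to check uniformity of the interpolation constants in the parabolic scale), in contrast to the elliptic case.
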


\subsection{Higher differentiability and integrability result}

We still need a lemma making Proposition~\ref{prop:invert para} applicable in the $\L^2$-setting of our main result.

\begin{lem}
\label{lem:IW embedding}
Suppose $s \in (\alpha, 2 \alpha)$, $p \in [2,\infty)$ and let $s+s' = 2\alpha$, $1/p + 1/p' = 1$. If $2/p \geq 1 - s'/n$, then $\L^2(\R; \L^2(\R^n)) \subset \IW_\alpha^{s',p'}(\R^{1+n})^*$ with continuous inclusion.
\end{lem}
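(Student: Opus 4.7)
The plan is to reduce the statement by duality to the Sobolev-type embedding
\begin{equation*}
\IW_\alpha^{s',p'}(\R^{1+n}) \hookrightarrow \L^2(\R^{1+n}).
\end{equation*}
Granting such an embedding with constant $C$, every $f \in \L^2(\R^{1+n})$ induces the antilinear functional $u \mapsto \int f \overline{u} \, \d t \, \d x$ of norm $\leq C \|f\|_2$ on $\IW_\alpha^{s',p'}(\R^{1+n})$, and since $\C_0^\infty(\R^{1+n})$ is dense in the latter space (as noted in Section~\ref{sec:Dirichlet para}), this provides the desired continuous inclusion $\L^2(\R^{1+n}) \hookrightarrow \IW_\alpha^{s',p'}(\R^{1+n})^*$.

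To establish the embedding, I would exploit the Besov characterization $\IW_\alpha^{s',p'} = \B^{\gamma,\bv}_{p',p'}(\R^{1+n})$ developed in Section~\ref{sec:Dirichlet para}, with $\|\bv\|_1 = n+1$ and $\gamma = (n+1)s'/(n+2\alpha)$. The anisotropic Sobolev--Besov embedding from \cite{Dachkovski} (or a direct Littlewood--Paley argument analogous to the classical isotropic proof in \cite{NewTriebel}) then yields
\begin{equation*}
\IW_\alpha^{s',p'}(\R^{1+n}) \hookrightarrow \L^{q_0}(\R^{1+n}), \qquad \frac{1}{q_0} = \frac{1}{p'} - \frac{s'}{n+2\alpha},
\end{equation*}
where the subcriticality $1/q_0 > 0$ is immediate from $s'p' < 2\alpha$. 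The hypothesis $2/p \geq 1 - s'/n$ rewrites as $1/p' - 1/2 \leq s'/(2n)$; since $\alpha < 1 \leq n/2$ and hence $n+2\alpha \leq 2n$, this forces $1/p' - 1/2 \leq s'/(n+2\alpha)$, so that $q_0 \geq 2$. Combined with the trivial inclusion $\IW_\alpha^{s',p'} \hookrightarrow \L^{p'}(\R^{1+n})$ built into the $\IW$-norm, and noting $p' \leq 2 \leq q_0$, the log-convexity of $\L^p$-norms gives $\L^{p'} \cap \L^{q_0} \hookrightarrow \L^2(\R^{1+n})$, whence the desired embedding.

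The main obstacle is locating a convenient statement of the anisotropic Sobolev--Besov embedding above. A self-contained alternative route is as follows: the fractional Sobolev embeddings applied separately in time and space give $\IW_\alpha^{s',p'} \hookrightarrow \L^{p'}(\R; \L^{q_1}(\R^n))$ and $\IW_\alpha^{s',p'} \hookrightarrow \L^{q_2}(\R; \L^{p'}(\R^n))$ with $1/q_1 = 1/p' - s'/n$ and $1/q_2 = 1/p' - s'/(2\alpha)$; complex interpolation of these mixed-norm Lebesgue spaces yields the embedding into $\L^2(\R^{1+n})$ exactly at the critical value $p'_c$ determined by $1/p'_c - 1/2 = s'/(n+2\alpha)$, and the range $p' \in [p'_c, 2]$ that is relevant here is then covered by complex interpolation in $p'$ between $p'_c$ and $2$, using the interpolation identity for $\IW$-spaces recorded in Section~\ref{sec:Dirichlet para} together with the trivial case $p'=2$.
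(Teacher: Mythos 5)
Your reduction by duality to the embedding $\IW_\alpha^{s',p'}(\R^{1+n}) \hookrightarrow \L^2(\R^{1+n})$ and the use of density of $\C_0^\infty(\R^{1+n})$ are exactly the paper's starting point, and the arithmetic you do to verify that the hypothesis $2/p \geq 1 - s'/n$ (equivalently $1/p' - 1/2 \leq s'/(2n)$, which is stronger than the critical value $s'/(n+2\alpha)$) suffices is correct. Where you differ is in how you establish the embedding. The paper does not invoke any anisotropic Sobolev--Besov theorem: it applies the (isotropic) fractional Sobolev embedding of Lemma~\ref{lem:Fractional Sobolev} separately in $t$ and in $x$, but deliberately does \emph{not} push to the critical conjugate exponents. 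Instead it uses the flexibility $1/p' - s'/n \leq 1/q \leq 1/p'$ (resp.\ the temporal analogue with $2\alpha$ in place of $n$) to choose the \emph{symmetric} target $q = r = p$, getting $\IW^{s',p'}_\alpha \subset \L^p(\R;\L^{p'}(\R^n)) \cap \L^{p'}(\R;\L^p(\R^n))$; by symmetry the mixed-norm interpolation (Bergh--L\"ofstr\"om Thm.~5.1) at $\theta = 1/2$ then lands \emph{exactly} on $\L^2(\R;\L^2(\R^n))$. Both of your routes give the result, but at a price: your first route needs the anisotropic critical embedding $\B^{\gamma,\bv}_{p',p'} \hookrightarrow \L^{q_0}$, which is not in the references the paper cites and which you yourself flag as the obstacle; your second route, by taking the critical conjugates $q_1, q_2$, gets the interpolation onto $\L^2$ only at the critical $p'_c$ with $1/p'_c - 1/2 = s'/(n+2\alpha)$ (a value where the lemma's hypothesis actually fails, since $n+2\alpha < 2n$), and you must then run a second interpolation, in $p'$, using the $\IW$-scale identity and the trivial case $p'=2$. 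That second route is correct -- and even proves a slightly stronger range than asserted -- but it trades the paper's one-step symmetric choice $q=r=p$ for a two-step argument. If you rework route~2 to exploit the freedom in the target exponent (take $q=r=p$ rather than the critical conjugates), you recover precisely the paper's proof and the extra interpolation in $p'$ disappears.
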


\begin{proof}
Since $p' s' < 2\alpha < 2 \leq n$ by assumption, we can infer from Lemma~\ref{lem:Fractional Sobolev} the continuous embedding 
\begin{align*}
 \W^{s',p'}(\R^n) \subset \L^q(\R^n) \qquad (\tfrac{1}{p'} - \tfrac{s'}{n} \leq \tfrac{1}{q} \leq \tfrac{1}{p'}).
\end{align*}
Likewise, by the vector valued analog of Lemma~\ref{lem:Fractional Sobolev} (see the beginning of Section~\ref{sec:Dirichlet para}) we have 
\begin{align*}
 \W^{\frac{s'}{2\alpha},p'}(\R; \L^{p'}(\R^n)) \subset \L^r(\R; \L^{p'}(\R^n)) \qquad (\tfrac{1}{p'} - \tfrac{s'}{2\alpha} \leq \tfrac{1}{r} \leq \tfrac{1}{p'})
\end{align*}
Now, the additional condition $2/p \geq 1 - s'/n$ along with $2\alpha < n$ precisely guarantees that we can take $q=p=r$ and therefore 
\begin{align*}
 \IW^{s',p'}_\alpha(\R^{1+n}) = \W^{\frac{s'}{2\alpha},p'}(\R; \L^{p'}(\R^n)) \cap \L^{p'}(\R; \W^{s',p'}(\R^n)) \subset \L^{p}(\R; \L^{p'}(\R^n)) \cap \L^{p'}(\R; \L^{p}(\R^n)).
\end{align*}
Taking into account the convex combinations $\frac{1}{2} = \frac{1-\theta}{p} + \frac{\theta}{p'} =  \frac{1-\theta}{p'} + \frac{\theta}{p}$ for $\theta = \frac{1}{2}$, standard embeddings for mixed Lebesgue spaces imply that the right-hand space is continuously included in $\L^2(\R; \L^2(\R^n))$, see for example \cite[Thm.~5.1\&5.2]{Bergh-Loefstroem}. The claim follows by duality with respect to the inner product on $\L^2(\R; \L^2(\R^n))$. 
\end{proof}

With this at hand, we are ready to give the

\begin{proof}[Proof of Theorem~\ref{thm:Cauchy}]
Let $f \in \L^2(0,T; \L^2(\R^n))$. Since uniqueness is known, only existence of a weak solution to \eqref{eq:Cauchy} with the stated properties is a concern. To this end, we shall argue as in \cite{Dier-Zacher} by restriction from the real line, where we know how to improve regularity. 

We extend $A(t,x,y) :=1$ and $f(t):=0$ for $t \notin [0,T]$. Then, $g(t):=\e^{-t}f(t) \in \L^2(\R; \L^2(\R^n))$ and thus Lemma~\ref{lem:Lax-Milgram para} furnishes 
\begin{align*}
 v:=(1 + \partial_t + \Lop_{\alpha,A})^{-1} g \in \IW^{\alpha,2}_\alpha(\R^{1+n}),
\end{align*}
which is a weak solution to
\begin{align*}
 \partial_t v(t) + \Lop_{\alpha,A(t)}v(t) = e^{-t}f(t) - v(t) \qquad (t \in \R).
\end{align*}
In particular, $v$ is a continuous function on $\R$ with values in $\L^2(\R^n)$ (see Remark~\ref{rem:parasol} (i)). We claim $v(0)=0$. Indeed, $t \mapsto \|v(t)\|_2^2$ is absolutely continuous with derivative $\frac{\mathrm{d}}{\mathrm{d} t} \|v(t)\|_2^2 = 2 \Re \langle \partial_tv(t), v(t) \rangle$, where $\langle \blank, \cdot \rangle$ denotes the $\W^{\alpha,2}(\R^n)^*$-$\W^{\alpha,2}(\R^n)$ duality \cite[Prop.~1.2]{Showalter}. By \eqref{eq:lower bound sesqui},
\begin{align*}
 \lambda \int_{-\infty}^0 \|v\|_{\alpha,2}^2 \d t
\leq \Re \int_{-\infty}^0 \langle v+ \Lop_{\alpha,A(t)} v, v \rangle \, \d t
= - \Re \int_{-\infty}^0 \langle \partial_t v, v \rangle \, \d t
= -\frac{1}{2} \|v(0)\|_2^2,
\end{align*}
where we have used the equation for $v$ along with $f(t) = 0$ for $t\in (-\infty,0)$ in the second step. Thus, $\|v(0)\|_2 = 0$. The upshot is that the restriction of $\e^t v(t)$ to $[0,T]$ is the unique weak solution $u$ to the Cauchy problem \eqref{eq:Cauchy} and it remains to prove the additional regularity.

Let $s>\alpha$, $p>2$ sufficiently close to $\alpha$, $2$, so that we have both Lemma~\ref{lem:IW embedding} and Proposition~\ref{prop:invert para} at our disposal. Defining $s'$ and $p'$ as usual, the former guarantees $g \in \IW_\alpha^{s',p'}(\R^{1+n})^*$ and thus the latter yields $v \in \IW^{s,p}_\alpha(\R^{1+n})$. As we have $u(t) = \e^{t} v(t)$ for $t \in [0,T]$, restricting to $[0,T]$ readily yields that the left-hand side of \eqref{eq:Cauchy Holder estimate} is controlled by 
\begin{align*}
 \e^T(\|v\|_{p} + \Isemi{v}_{s,p}) 
\lesssim \e^T \|g\|_{\IW_\alpha^{s',p'}(\R^{1+n})^*}
\lesssim \e^T \|g\|_{\L^2(\R; \L^2(\R^n))}
\lesssim \e^T \|f\|_{\L^2(0,T; \L^2(\R^n))}
\end{align*}
as claimed.

Repeating the same argument with $s> \alpha$ and $p=2$ reveals $v \in \IW^{s,2}_\alpha(\R^{1+n})$ and in particular $u \in \L^2(0,T; \W^{\alpha+\eps,2}(\R^n))$, where $\eps:=s-\alpha > 0$. By H\"older's inequality this also implies $u \in \L^2(0,T; \W^{\alpha-\eps,2}(\R^n)^*)$. Moreover, from the equation for $u$ since $\Lop_{\alpha,A(t)}: \W^{\alpha+\eps,2}(\R^n) \to \W^{\alpha-\eps,2}(\R^n)^*$ is bounded by $\lambda^{-1}$ uniformly in $t$ due to Lemma~\ref{lem:1+L bounded}, we deduce
\begin{align*}
\bigg|\int_0^T - \langle u , \partial_t \phi \rangle_2  \, \d t \bigg|
 \leq \int_0^T \|f(t)\|_2 \|\phi(t)\|_2 + \lambda^{-1} \|u(t)\|_{\alpha+\eps,2} \|\phi(t)\|_{\alpha-\eps,2} \, \d t
\end{align*}
for all $\phi \in \C_0^\infty((0,T) \times \R^n)$. By density, see Remark~\ref{rem:parasol}, this remains true for $\phi \in \H^1(0,T; \W^{\alpha-\eps,2}(\R^n))$ and we conclude $u \in \H^1(0,T; \W^{\alpha-\eps,2}(\R^n)^*)$ as required.
\end{proof}
%%%%%%%%%%%%%%%%%%%%%%%%%%%%%%%%%%%%%%%%%%%%%%%%%%%%%%%%%%%%%%%%%%%%%%%%%%%%%%%%%%%%%%%%%%%%%%%%%%%%%%%%%%%%%%%%%%%%%%%%%%%%%%%%%%%%%%%%%%%%%%%%%%%%%%%%%%%%%%%%%%%%%%%%
\def\cprime{$'$} \def\cprime{$'$} \def\cprime{$'$}

\end{document}